\newcommand{\commentout}[1]{}
\newcommand {\Chi} {{\bf \raise 2pt \hbox{$\chi$}} }
\newcommand {\f}   {\frac}
\newcommand{\beq}{\begin{equation}}
\newcommand{\eeq}{\end{equation}}
\newcommand{\bal}{\begin{align}}
\newcommand{\bc}{\begin{cases}}
\newcommand{\ec}{\end{cases}}
\newcommand{\bea} {\begin{array}{rl}}
\newcommand{\eea} {\end{array}}
\newcommand{\bepa}{\left\{ \begin{array}{l}}
\newcommand{\eepa} {\end{array}\right.}
\newtheorem{theorem}{Theorem}[section]
\newtheorem{lemma}[theorem]{Lemma}
\newtheorem{corollary}[theorem]{Corollary}
\newtheorem{prop}[theorem]{Proposition}
\newtheorem{hyp}[theorem]{Assumption}
\def\be{\begin{eqnarray}}
\def\ee{\end{eqnarray}}
\def\ben{\begin{eqnarray*}}
\def\een{\end{eqnarray*}}
\numberwithin{equation}{section}
\numberwithin{figure}{section}
\def\be{\begin{eqnarray}}
\def\ee{\end{eqnarray}}
\def\me{\medskip\noindent}
\newcommand{\Co}{\mathcal{C}}
\def\T{\mathbb{T}}
\def\Z{\mathbb{Z}}
\def\N{\mathbb{N}}
\def\P{\mathbb{P}}
\def\R{\mathbb{R}}
\def\E{\mathbb{E}}
\def\11{\mathbbm{1}}
\title{Filling the gap between individual-based evolutionary models and Hamilton-Jacobi equations}
\author{
 Nicolas Champagnat\thanks{Universit\'e de Lorraine, CNRS, Inria, IECL, F-54000 Nancy, France; E-mail: \texttt{nicolas.champagnat@inria.fr}}
\and {Sylvie M\'el\'eard}\thanks{Ecole Polytechnique et Institut Universitaire de France, CNRS, Institut polytechnique de Paris, route de Saclay, 91128 Palaiseau Cedex-France; E-mail: \texttt{sylvie.meleard@polytechnique.edu}}  \and Sepideh Mirrahimi\thanks{ Institut Montpelli\'erain Alexander Grothendieck, Univ. Montpellier, CNRS, Montpellier, France; E-mail: \texttt{sepideh.mirrahimi@umontpellier.fr}} \and Viet Chi Tran \thanks{LAMA, Univ Gustave Eiffel, Univ Paris Est Creteil, CNRS, F-77454 Marne-la-Vall\'ee, France; E-mail: \texttt{chi.tran@univ-eiffel.fr}}
 }
\date{\today}
\begin{document}
\maketitle
\pagestyle{plain}
\pagenumbering{arabic}

\begin{abstract}
We consider a stochastic model for the evolution of a discrete population structured by a trait with values on a finite grid of the torus, and with mutation and selection. Traits are vertically inherited unless a mutation occurs, and influence the birth and death rates. We focus on a parameter scaling where population is large, individual mutations are small but not rare, and the grid mesh for the trait values is much smaller than the size of mutation steps. When considering the evolution of the population in a long time scale, the contribution of small sub-populations may strongly influence the dynamics. Our main result quantifies the asymptotic dynamics of sub-population sizes on a logarithmic scale. We establish that under the parameter scaling the logarithm of the stochastic population size process, conveniently normalized, converges to the unique viscosity solution of a Hamilton-Jacobi equation. Such   Hamilton-Jacobi equations have already been derived from parabolic integro-differential equations and have been widely developed in the study of adaptation of quantitative traits. Our work provides a justification of this framework directly from a stochastic individual based model, leading to a better understanding of the results obtained within this approach.
The proof makes use of almost sure maximum principles and careful controls of the martingale parts.
\end{abstract}

\me Keywords: stochastic birth death models, large population approximation, selection, mutation, viscosity solution, maximum principle.

\bigskip
\noindent \emph{MSC 2000 subject classification:} 92D25, 92D15, 60J80, 60F99, 35F21.
\bigskip


\section{Introduction and presentation of the model}
\label{sec:introduction}

Long-term evolutionary dynamics of biological populations may be strongly influenced by small populations and local extinction in some areas of the phenotypical trait space. Survival of small populations in very large populations is crucial when evolution proceeds by selective sweeps \cite{keelingpalmer2008} or for the evolution of antibiotic resistance for bacteria \cite{ochmanlawrencegroisman,stewartlevin}. For example, in bacterial populations involving horizontal transfer, it was shown in~\cite{billiardcolletferrieremeleardtran,calvez20}  that the individual-based long-term dynamics is very sensitive to random survival of very small populations, which may either drive the population to evolutionary suicide or to cyclic dynamics. In such context, the bacterial population is very large making the tracking of small populations very challenging.

From a point of view of mathematical modeling, one wishes to consider large population scalings allowing for survival of much smaller populations. Two approaches emerged: a purely deterministic one, based on partial differential equations (PDE), and a stochastic one, based on birth and death processes (so-called individual-based models in biology). Both approaches describes exponentially small populations sizes and characterize the dynamics of the exponents.

An analytical approach allowing to deal with negligible but non-extinct populations was proposed in~\cite{diekmannjabinmischlerperthame} {and then widely developed (see for instance~\cite{BP.GB:08,Barles2009,LMP}) for the asymptotic study of parabolic integro-differential selection-mutation models}. Let us present it in a setting close to~\cite{Barles2009}. We consider a population whose individuals are differentiated by a trait $x\in \mathbb{T}$, the torus of dimension 1, identified below with the interval $[0,1)$. The trait can vary from an individual to the other.
The evolution of the population is driven by two effects: mutation of the traits, and selection as the reproductive and survival abilities of an individual depend on its trait $x$.
For an individual of trait $x\in \mathbb{T}$, let us denote by $b(x)$ (resp. $d(x)$ and $p(x)$) the clonal birth rate (resp. the death rate and the birth rate with mutation), and by $G(h)$ the mutation kernel. Assuming that the population density solves the PDE
\begin{equation}
  \label{eq:PDE}
\begin{cases}  \varepsilon\partial_t u_\varepsilon(t,x)=u_\varepsilon(t,x)\left(b(x)-d(x)\right)+\int_{\mathbb{T}}\frac{1}{\varepsilon}G\left(\frac{x-y}{\varepsilon}\right)
  p(y)u_\varepsilon(t,y)\mathrm dy & (t,x)\in\R_+\times\T\\
        u_\varepsilon(t,0)=\exp\left(\frac{\beta_0(x)}{\varepsilon}\right), & x\in\T
    \end{cases}
\end{equation}
in the limit $\varepsilon\to 0$ of small mutations and large time, and applying the Hof-Cole transformation
\begin{equation}
    \label{eq:Hopf-Cole}
    \beta_\varepsilon(t,x)=\varepsilon\log u_\varepsilon(t,x),\quad\text{or}\quad u_\varepsilon(t,x)=\exp\left(\frac{\beta_\varepsilon(t,x)}{\varepsilon}\right),
\end{equation}
it is proved in~\cite{Barles2009} (in a slightly different setting{, considering $x\in \R$ and taking into account a competition term}) that $\beta_\varepsilon$ converges to the unique solution $\beta$ of the Hamilton-Jacobi equation
\begin{equation}
    \label{eq:HJ_intro}
    \begin{cases}
        \frac{\partial}{\partial t} \beta(t,x)=b(x)-d(x)+ p(x) \int_{\R} G(h)e^{h\partial_x \beta(t,x)} dh, & (t,x)\in\R_+\times\T\\
        \beta(0,x)=\beta_0(x), & x\in\T
    \end{cases}
\end{equation}

Scaling limits of individual-based models on a discrete trait space with rare mutations and large population, and allowing to deal with negligible populations and local extinction, were proposed in~\cite{durrettmayberry,Bovier2019,champagnatmeleardtran2021,coquille2021,blathpaultobias}. These references focus on population sizes of the order of $K^\beta$ and characterize the asymptotic dynamics of the exponent $\beta$. In particular, local extinction is possible when the exponent $\beta$ hits 0. The fact that the trait space is discrete allows to describe separately the dynamics of each small sub-population.
However, this makes the detailed description of the
asymptotic dynamics very complicated (see~\cite{champagnatmeleardtran2021,coquille2021}).
Note that mutations are assumed individually rare in these references, but they are more frequent than in the scaling limits of adaptive dynamics (see e.g.~\cite{metzgeritzmeszenajacobsheerwaarden,dieckmannlaw,champagnat06,champagnatmeleard2011}), where negligible populations either fixate or go extinct fast, due to the fact that the populational mutation rate vanishes here.
Scaling limits with non-vanishing populational mutation rates partly solve the criticisms raised by biologists~\cite{WaxmanGavrilets2005} concerning the too slow evolutionary speed in adaptive dynamics, particularly in microorganism populations. Biological criticisms were also raised for the analytical approach~\cite{perthame2010}, because
of the so-called {\it tail problem}: exponentially small populations, which may actually be extinct, can have a strong influence on the future evolutionary dynamics of the population. In particular, evolutionary branching is too fast.
Modification of the Hamilton-Jacobi equation were proposed  in~{\cite{perthame2010,Mirrahimi2012,jabin2012}} to solve this problem, but we believe that an individual-based approach is crucial to provide a more realistic and biologically relevant solution to the tail problem.

The purpose of our work is to provide a stochastic individual-based justification of Hamilton-Jacobi equations. To our knowledge, this is the first proof of this kind in the literature.
We follow an individual-based approach, assuming a continuous trait space with a vanishing discretization step $\delta_K$, where $K$ is a scaling parameter such that the population is of the order of $K^{\widetilde{\beta}^K(t,x)}$, assuming frequent and small mutations. In the individual-based model, individuals with trait $x\in\mathbb{T}$ give birth to a clone at rate $b(x)$, die at rate $d(x)$ or give birth to a mutant at rate $p(x)$. Mutant traits are drawn according to a discretization of the distribution $\log K G(\log K \cdot)$. Mutation steps are of the order of $1/\log K$ and the discretization step $\delta_K$ is assumed much smaller than $1/\log K$. In this first work, we focus on the understanding of the relevant scales allowing to capture the limiting Hamilton-Jacobi dynamics. Thus, we consider a simplified model where the birth rate $b$ is assumed larger than the death rate $d$, making the stochastic process super-critical, and the trait space has no boundary. Generalization is a work in progress.

The proof of our main result makes use of uniform Lipschitz bounds on the finite variation part of $\widetilde{\beta}^K$, obtained using an almost sure maximum principle and careful bounds for the martingale part. The identification of the limit is done by checking that it is almost surely viscosity solution of~\eqref{eq:HJ}. We describe the model and state our main result in Section~\ref{sec:model-main-result}. The proof is divided into two main steps---proof of tightness and identification of the limit---which are detailed in Sections~\ref{sec:main-steps} and~\ref{sec:identif}, respectively.

Note that the Hopf-Cole transformation~\eqref{eq:Hopf-Cole} is reminiscent of large deviations scalings. However, our scaling is more of a law of large numbers type. As far as we know, the large deviations interpretation of the Hamilton-Jacobi equation can be done through a Feynman-Kac interpretation of the PDE~\eqref{eq:PDE}~\cite{Champagnat-Henry-2019}. However, the stochastic process involved in the Feynman-Kac formula does not seem to be directly related to the biological population process, even though some works suggest that it may be related to the ancestral trait process of living individuals~\cite{foriengarnierpatout}.

\section{Model and main result}
\label{sec:model-main-result}
\subsection{The model}

We consider a super-critical stochastic birth-death-mutation model describing an asexual population of individuals characterized by a quantitative phenotypic or genetic  trait $x\in \T$. Starting from a finite population whose initial size is parameterized by $K\in \N$, our goal is to recover, in the limit $K\to +\infty$, an evolutionary dynamics described by the Hamilton-Jacobi partial differential equation \eqref{eq:HJ_intro}. For this, we consider a discretization of the trait space $\T$ with step $\delta_K\to 0$. For the sake of simplicity, we will consider in what follows that $1/\delta_K\in \N$. Then, the population is composed of individuals with traits belonging to the discrete space
\[
{\cal X}_K:=\left\{i\delta_K:i\in \{0, 1, \cdots, {1\over \delta_K}-1\}\right\},
\]
embedded with the torus distance: $\forall x, y \in [0,1)$,
\begin{align*}
\rho(x,y) & =\min\left\{|x'-y'|,\ x'=x\text{ mod }1,\ y'=y\text{ mod }1\right\} \\ & =\min\big(|x-y|,1-|x-y|\big).\end{align*}
It's enough to define $\rho(x,y)$ for $x,y \in \T$ by considering their representative in $[0,1)$.

The number of individuals with trait $i \delta_K$ is  described by the stochastic process $(N_i^K(t), t\ge 0)$. The total population size at time $t$  is then given by
$$
N^K(t)=\sum_{i=0}^{1/\delta_K-1}N_i^K(t).
$$
An individual with trait $x\in {\cal X}_K$
\begin{itemize}
    \item gives birth to a new individual with the same trait $x$ at rate $b(x)$;

\item dies  at rate $d(x)$;

    \item gives birth to a mutant individual with trait $y\in{\cal X}_K$ at rate
    \begin{equation}\label{mutation-rate}
        p(x)\delta_K \log K\, G\left((\overline{y-x})\,\log K\right),
    \end{equation}
    where
    $\overline{z}$ is the unique real number of $[-1/2,1/2)$ that is equal to $z$ modulo 1, i.e. $\overline{z}=z-\lfloor z+1/2\rfloor$, where $\lfloor\cdot\rfloor$ is the integer part function.
\end{itemize}


{In the rest of the paper, the following assumptions are made:}

\begin{hyp}\label{hypmodele}
1. We assume that $b$, $d$  and $p$  are nonnegative  Lipschitz continuous  functions  defined on $\T$, such that for all $x\in\T$,
\begin{equation}\label{hyp:supercritical}
b(x)>d(x) \hbox{ and } p(x)>0.
\end{equation}
This means that the birth-death process for each trait is super-critical. In the sequel, we denote by $\bar{b}$, $\bar{p}$ and $\bar{d}$ the upper bounds of these functions on $\T$, by $\,\underline p>0$ the lower bound of $p$, and by $\|b\|_{\text{Lip}}$, $\|d\|_{\text{Lip}}$ and $\|p\|_{\text{Lip}}$ their Lipschitz norm.\\

\noindent 2. The function $G$ defined on $\mathbb{R}$ is  nonnegative, continuous, satisfies $\int_\R G(y)\,dy=1$ and has finite exponential moments of any order. Moreover, we assume that there exists $R>0$ such that $G$ is nonincreasing on $[R,+\infty)$ and nondecreasing on $(-\infty,-R]$.\\
An example of function $G$ satisfying Assumption 2 is given by the Gaussian kernel  $G(h)=\frac{1}{\sqrt{2\pi}\sigma}e^{-h^2/2\sigma^2}$.\\

\noindent 3. There exists a constant $a_1>0$ such that, for all $K\in \N$ and all $i\in\{0, 1, \cdots, {1\over \delta_K}-1\}$,
\begin{equation}\label{hyp:condinit}
    N^K_i(0)\geq K^{a_1}.
\end{equation}

\noindent 4. There exists $a_2<a_1$ such that
\begin{equation}\label{hyp:deltaK}
    K^{-a_2/4}\ll \delta_K\ll \frac{1}{\log K}\quad \mbox{ as }\quad K\rightarrow +\infty.
\end{equation}
\end{hyp}

Point 4 above implies that
\begin{equation}\label{def:hk}
h_K:=\delta_K \log K\ll 1.
\end{equation}
Therefore, the interpretation of \eqref{mutation-rate} is that the rate at which $x$ gives birth to a mutant individual is close to $ p(x)$. Indeed, for an individual with trait $x_K=i_K \delta_K$ with $i_K=\lfloor x/\delta_K\rfloor$ and $x\in\T$ fixed,
\begin{align}\label{eq:conv_integrale}
\lim_{K\rightarrow +\infty}p(x_K)\sum_{j=0}^{{1\over \delta_K}-1} h_K G\left(\overline{(i_K-j)\delta_K}\, \log K\right) & =\lim_{K\rightarrow +\infty}p(x_K)\sum_{\ell=-\lfloor 1/2\delta_K\rfloor}^{1/\delta_K-1-\lfloor 1/2\delta_K\rfloor} h_K G\left(h_K\ell\right) \notag
\\ & =p(x)\int_{\mathbb{R}}G(y)\,dy=p(x),
\end{align}
where we used Assumption~\ref{hypmodele}.2 to control the tails of the Riemann sum.

Therefore, the individual mutation rate is order 1
and mutation steps are small: conditionally on being a mutant, the trait $y$ of the offspring has the distribution $G$ scaled by a factor $1/\log K$ representing the order of magnitude of the mutation steps.
Note also that \eqref{def:hk} means that the mesh size is much smaller than the mutation step.

\medskip
Our goal is to study the asymptotic behavior (when $K$ tends to infinity) of the population sizes $N_i^K$ when $N_i^K(0)$ is of the order of $K^{\alpha_i}$ for some $\alpha_i>0$. Note that in the case where $p(x)=0\,$ for all $x\in\T$, the process $N^K_i(t)$ is a super-critical one-dimensional branching process, hence $\E( N^K_i(t))=\E(N^K_i(0)) e^{(b(i \delta_K)-d(i\delta_K))t}$. Therefore, if the initial condition is of order $K^{\alpha_i}$, then $\E[ N^K_i(t\log K)]\sim K^{\alpha_i + (b(i \delta_K)-d(i\delta_K))t}$.
This suggests to study
\begin{equation}
    \label{beta}
\beta_i^K(t)=\f{\log(N_i^K(t \log K))}{\log(K)},
\end{equation}
with the convention that $\beta^K_i(t)=0\,$ if $\,N^K_i(t\log K)=0$. So the sub-population of trait $i\delta_K$ at time $t\log K$ has size $N^K_i(t \log K)=K^{\beta^K_i(t)}$.

\medskip
 We make the following assumption on the initial condition $\beta^K_i(0)$.

\begin{hyp}\label{hypconvergence}
\noindent Assume that  there exists a constant $A>0$ such that
\begin{equation}\label{hyp:condlip}
\lim_{K\rightarrow+\infty}\mathbb{P}\Big(\sup_{i\neq j}\frac{|\beta^K_i(0)-\beta^K_j(0)|}{\rho(i\, \delta_K,j\, \delta_K)}> A\Big)=0.
\end{equation}
\end{hyp}

\bigskip
{\bf Notations:}
 (i) We shall use the following Riemann approximation repeatedly in the proofs: for all $\alpha>0$,
\begin{multline}\label{eq:momentexpo:G}
\lim_{K\rightarrow +\infty}\sum_{j=0}^{{1\over \delta_K}-1} h_K G\left(\overline{(i_K-j)\delta_K}\,\log K\right) e^{ \alpha\,\log K\,  \rho(j \delta_K,i_K \delta_K)}\\
 =\lim_{K\rightarrow +\infty} \sum_{\ell=-\lfloor 1/2\delta_K\rfloor}^{1/\delta_K-1-\lfloor 1/2\delta_K\rfloor} h_K G\left(h_K\ell\right)e^{ \alpha\,{h_K} \,  | {\ell} |}
=\int_{\mathbb{R}}e^{\alpha |y|}G(y)\,dy,
\end{multline}
and thus, there exists a constant $\overline{G}(\alpha)$ depending only on $\alpha>0$ such that 
\begin{equation}\label{eq:conv_moment-expo}
\sup_{K\geq 1}\,\sum_{j=0}^{{1\over \delta_K}-1} h_K G\left(\overline{(i_K-j)\delta_K}\,\log K\right)e^{ L\,\log K\, \rho(j \delta_K,i_K \delta_K)} =\ :\ \overline G(\alpha)<+\infty.
\end{equation}

\smallskip\noindent (ii) In  what follows and for any function $f$ on $ \{0, 1, \cdots, {1\over \delta_K}-1\}$, we will use the notation
 $$\Delta_Kf_i=  \frac{f_{i+1}-f_i}{\delta_K},$$
 with the convention that $f_{1/\delta_K}=f_0$.

\subsection{The main result - Sketch of the proof}

\par Since we are interested in the convergence of the quantities $\beta_i^K$  to a continuous function defined on the trait space $\T$, when  $K\to+\infty$ and $\delta_K \to 0$, we introduce the following interpolation of the $\beta_i^K$'s:
for all $x\in\T$ and $K\geq 1$, let $i$ be such that $x\in [i\delta_K,(i+1)\delta_K)$, and define
\begin{equation}
\label{lafonction}
 \widetilde \beta^K_{t}(x) =\beta_i^K(t)\Big(1-\f {x}{\delta_K}+i\Big)+\beta_{i+1}^K(t)\Big(\f {x}{\delta_K}-i\Big),
\end{equation}
with the convention that $\beta^K_{1/\delta_K}(t)=\beta^K_0(t)$.
The sequence of processes  $(\widetilde \beta^K_{t}, t\in[0,T])_{K}$  belongs to $\mathbb{D}([0,T], \Co(\T,\mathbb{R}))$, where $\Co(\T,\mathbb{R})$ is endowed with the topology of uniform convergence and $\mathbb{D}([0,T], \Co(\T,\mathbb{R}))$ is the Skorohod space of c\`adl\`ag paths with the associated Skorokhod topology.



Let us state our main theorem.

\begin{theorem}\label{thm:main}
Let $T>0$. Under the Assumptions \ref{hypmodele} and \ref{hypconvergence}, and assuming that $\widetilde{\beta}^K_0(\cdot)$ converges in probability for the topology of uniform convergence on $\Co(\T,\mathbb{R})$ to a deterministic function $\beta_0(\cdot)\in \Co(\T,\mathbb{R})$,
the sequence  $(\widetilde{\beta}^K)_K$ converges in probability in $\mathbb{D}([0,T], \Co(\T,\mathbb{R}))$ to the unique Lipschitz viscosity solution of the Hamilton-Jacobi equation
\begin{equation}
    \label{eq:HJ}
    \begin{cases}
        \frac{\partial}{\partial t} \beta(t,x)=b(x)-d(x)+ p(x) \int_{\R} G(h)e^{h\partial_x \beta(t,x)} dh, & (t,x)\in\R_+\times \T \\
        \beta(0,x)=\beta_0(x), & x\in \T.
     \end{cases}
\end{equation}
\end{theorem}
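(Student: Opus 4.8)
The strategy is the standard two-step scheme for establishing convergence to a viscosity solution: (i) prove tightness of $(\widetilde\beta^K)_K$ in $\mathbb{D}([0,T],\Co(\T,\R))$, and (ii) identify every subsequential limit as the (unique) Lipschitz viscosity solution of~\eqref{eq:HJ}. Uniqueness of the Lipschitz viscosity solution of~\eqref{eq:HJ} is a classical Hamilton-Jacobi fact (the Hamiltonian $H(x,q)=b(x)-d(x)+p(x)\int_\R G(h)e^{hq}\,dh$ is continuous, and since we will have an a priori Lipschitz bound on $\beta$ the relevant values of $q$ are bounded, so a comparison principle applies). Once (i) and (ii) hold, the limit is deterministic and unique, which upgrades convergence in distribution to convergence in probability.

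For the tightness step, the key is to propagate in time the spatial-Lipschitz control on the initial data provided by Assumption~\ref{hypconvergence}. I would write the semimartingale decomposition of $N_i^K(t\log K)$, hence of $\beta_i^K(t)=\log N_i^K(t\log K)/\log K$, splitting it into a finite-variation (drift) part $V_i^K(t)$ and a martingale part $M_i^K(t)$. The drift of $\beta_i^K$ should be, to leading order, $b(i\delta_K)-d(i\delta_K)+p(i\delta_K)\sum_j h_K G(\overline{(i-j)\delta_K}\log K)\,K^{\beta_j^K-\beta_i^K}$, i.e.\ a discrete version of the right-hand side of~\eqref{eq:HJ} with $e^{h\partial_x\beta}$ replaced by $K^{(\beta_{j}^K-\beta_i^K)}=e^{\log K(\beta_j^K-\beta_i^K)}$ and the mutation kernel by its discretization. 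The crucial estimate is a \emph{discrete almost-sure maximum principle}: if at some time the spatial increments $|\beta_{i+1}^K-\beta_i^K|/\delta_K$ are bounded by some constant $L$, then the structure of the drift (the nonlinearity is increasing and convex in the increment, and symmetric enough through the $\rho$-dependence) prevents this bound from being exceeded, up to controlled error terms. Here~\eqref{eq:conv_moment-expo}, which gives $\sup_K\sum_j h_K G(\overline{(i-j)\delta_K}\log K)e^{L\log K\,\rho(j\delta_K,i\delta_K)}=\overline G(L)<\infty$, is exactly what is needed to keep the nonlinear term finite under a Lipschitz bound of size $L$. Combining this with martingale controls (Doob/Burkholder–Davis–Gundy applied to $M_i^K$, whose bracket is of order $1/(N_i^K\log K)$ times bounded rates, hence small because $N_i^K\geq K^{a_1}$; the assumption $K^{-a_2/4}\ll\delta_K$ is what lets these bounds survive the $1/\delta_K$ sites) yields, with probability tending to $1$, a uniform-in-$K$ bound of the form $\sup_{t\leq T}\|\partial_x\widetilde\beta^K_t\|_\infty\leq C$ and a uniform-in-$K$ bound on the time-modulus of continuity of $\widetilde\beta^K$. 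Spatial equicontinuity plus temporal equicontinuity give tightness via Arzelà–Ascoli and Aldous-type criteria.

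For the identification step, I would fix a smooth test function $\varphi$ and a point $(t_0,x_0)$ at which $\beta-\varphi$ (for a limit $\beta$) has a strict local maximum, and argue that along the converging subsequence $\widetilde\beta^K-\varphi$ has a near-maximum at some $(t^K,x^K)\to(t_0,x_0)$, with the maximizing index $i^K$ such that $i^K\delta_K\to x_0$. At that index, comparing $\beta_{i^K}^K$ with the test function and using the semimartingale decomposition, one shows that the drift term, evaluated using that $\beta_j^K-\beta_{i^K}^K\leq \varphi(j\delta_K)-\varphi(i^K\delta_K)+o(1)$ near the max, is controlled; passing to the limit (the discrete kernel sum converges to $\int_\R G(h)e^{h\partial_x\varphi(t_0,x_0)}dh$ by a dominated-convergence/Riemann-sum argument as in~\eqref{eq:conv_integrale}–\eqref{eq:momentexpo:G}, with domination from~\eqref{eq:conv_moment-expo} and the Lipschitz bound, and the martingale part vanishing uniformly) gives $\partial_t\varphi(t_0,x_0)\leq b(x_0)-d(x_0)+p(x_0)\int_\R G(h)e^{h\partial_x\varphi(t_0,x_0)}dh$, i.e.\ the subsolution inequality. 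The supersolution inequality is symmetric, using local minima and the lower bound $N_i^K\geq K^{a_1}>0$ (so that $\beta_i^K$ does not get trapped at $0$ and the convention causing the floor at $0$ is never active on the relevant time scale). Together with the initial condition $\widetilde\beta^K_0\to\beta_0$, this identifies $\beta$ as \emph{the} Lipschitz viscosity solution of~\eqref{eq:HJ}.

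\textbf{Main obstacle.} The delicate point is the almost-sure maximum principle and the accompanying martingale estimates: one must show that, uniformly over the $1/\delta_K\to\infty$ sites and over $t\leq T$ (a diverging number of ``comparisons''), the martingale fluctuations of $\beta_i^K$ are negligible compared to $\delta_K$, so that they do not spoil the discrete Lipschitz bound. This requires carefully exploiting $N_i^K(0)\geq K^{a_1}$ together with the super-criticality $b>d$ (which keeps populations from becoming too small, so the martingale bracket stays small) and the precise interplay of the scales in Assumption~\ref{hypmodele}.4, in particular $K^{-a_2/4}\ll\delta_K$; the whole argument presumably proceeds by a bootstrap/stopping-time argument on the event that the Lipschitz bound holds up to time $t$.
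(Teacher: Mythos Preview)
Your proposal is correct and follows essentially the same approach as the paper: semimartingale decomposition of $\beta_i^K$, martingale controls via Doob-type inequalities exploiting $N_i^K\gtrsim K^a$ and the scale relation $K^{-a_2/4}\ll\delta_K$, an almost-sure maximum principle under a stopping-time bootstrap to propagate the spatial Lipschitz bound, and identification of the limit by the standard viscosity test-function argument with the discrete kernel sums handled by~\eqref{eq:conv_moment-expo}. One implementation detail worth flagging: because the mutation rate carries the factor $p((i+\ell)\delta_K)$ rather than $p(i\delta_K)$, the naive maximum principle applied directly to $\Delta_K A_i^K$ does not close; the paper remedies this with a Bernstein-type correction, applying the maximum principle instead to $g_i^K=\Delta_K A_i^K+\tfrac{\|p\|_{\mathrm{Lip}}}{\underline p}A_{i+1}^K$, which absorbs the extra Lipschitz error from $p$.
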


The proof of Theorem \ref{thm:main} will be classically obtained in two steps: tightness and identification of the limiting values. Therefore we will prove the two next results, respectively in Sections~\ref{sec:main-steps} and~\ref{sec:identif}.

\begin{theorem}
\label{thm:tight}
The sequence of laws of  $(\tilde \beta^K_{t}, t\in[0,T])_{K}$ is relatively compact in ${\cal P}(\mathbb{D}([0,T], \Co(\T,\mathbb{R})))$.
\end{theorem}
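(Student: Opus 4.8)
The plan is to obtain relative compactness in the Polish space $\mathbb{D}([0,T],\Co(\T,\R))$ by verifying the two standard conditions: a compact containment condition and a control of the time‑oscillation modulus. The backbone is the semimartingale decomposition of each coordinate: by the Dynkin formula, $\beta_i^K(t)=\beta_i^K(0)+V_i^K(t)+M_i^K(t)$, where $M_i^K$ is a purely discontinuous martingale whose jumps, like those of $\beta_i^K$, are of size $O(1/(N_i^K\log K))$, and where, up to $O(1/N_i^K)$ correction terms coming from $\log(1\pm 1/N_i^K)$,
\[
\frac{d}{dt}V_i^K(t)=b(i\delta_K)-d(i\delta_K)+\sum_{j}p(j\delta_K)\,h_K\,G\big(\overline{(i-j)\delta_K}\log K\big)\,K^{\beta_j^K(t)-\beta_i^K(t)}.
\]
The first ingredient is a uniform $L^\infty$ bound: on an event of probability tending to $1$, $\underline\beta\le\beta_i^K(t)\le\bar\beta$ for all $i$ and all $t\le T$, for deterministic $0<\underline\beta\le\bar\beta<\infty$. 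The lower bound (one can take $\underline\beta=a_1/2$) follows by stochastic comparison of $N_i^K$ with the supercritical clonal birth–death process started from $K^{a_1}$ — whose probability of ever dropping below $K^{a_1}/2$ is super‑exponentially small, hence survives the union bound over the $\ll K^{a_2/4}$ traits — mutations only increasing $N_i^K$; the upper bound follows by dominating the total population $N^K$ by a Yule process and applying Doob's maximal inequality.

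The heart of the proof is a uniform‑in‑$K$, Lipschitz‑in‑space bound for the \emph{finite‑variation part}, via an almost‑sure maximum principle. Set $\Lambda^K(t):=\sup_{i\neq j}\frac{V_i^K(t)-V_j^K(t)}{\rho(i\delta_K,j\delta_K)}$; since the $V^K$'s are Lipschitz in $t$ and the supremum is over finitely many coordinates, $\Lambda^K$ is Lipschitz and, for a.e.\ $t$, its derivative is bounded by $\frac{1}{\rho(i^*\delta_K,j^*\delta_K)}$ times the drift of $\beta_{i^*}^K-\beta_{j^*}^K$, for any pair $(i^*,j^*)$ realizing the supremum. In that drift, the difference of $b-d$ contributes at most $(\|b\|_{\text{Lip}}+\|d\|_{\text{Lip}})\,\rho(i^*\delta_K,j^*\delta_K)$. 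For the mutation term, the crucial point is that the torus distance is translation invariant, so $\rho((i^*+\ell)\delta_K,(j^*+\ell)\delta_K)=\rho(i^*\delta_K,j^*\delta_K)$, and maximality at $(i^*,j^*)$ forces $V_{i^*+\ell}^K-V_{i^*}^K\le V_{j^*+\ell}^K-V_{j^*}^K$ for every $\ell$; combining this with the Lipschitz continuity of $p$, and replacing $\beta^K$ by $V^K$ up to a $1+o(1)$ factor thanks to the smallness of $M^K$ established below, the mutation difference is bounded by $\|p\|_{\text{Lip}}\,\overline G(\Lambda^K(t))\,\rho(i^*\delta_K,j^*\delta_K)+o(\rho(i^*\delta_K,j^*\delta_K))$, using the Riemann estimate~\eqref{eq:conv_moment-expo}. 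This yields a Gronwall‑type inequality $\frac{d}{dt}\Lambda^K(t)\le C_1+C_2\,\overline G(\Lambda^K(t))+o(1)$ with, by Assumption~\ref{hypconvergence}, $\Lambda^K(0)\le A$ on a high‑probability event, from which $\sup_{t\le T}\Lambda^K(t)\le\bar L$ for a deterministic $\bar L$. Since $M^K$ is used implicitly here, the estimate is closed by a bootstrap on the stopping time at which $\Lambda^K$, or $\max_i|M_i^K|$, or the $L^\infty$ bound first fails. Making this maximum‑principle argument work — in particular controlling the nonlinear (exponential) mutation term uniformly on $[0,T]$ while the martingale sits inside the drift — is the step I expect to be the main obstacle.

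For the martingale, on the event where the previous bounds hold one computes $\langle M_i^K\rangle_T\le C\big(\bar b+\bar d+\bar p\,\overline G(\bar L)\big)/(K^{\underline\beta}\log K)$ — each jump is $O(1/(N_i^K\log K))$ and occurs at a rate bounded, via the Lipschitz estimate, by $N_i^K$ times a constant — so Doob's $L^2$‑inequality and a union bound over the $\ll K^{a_2/4}$ traits give $\sup_{t\le T}\max_i|M_i^K(t)|\le\e_K\to0$ with probability tending to $1$, using $a_2<a_1$. One then concludes as follows: on a high‑probability event, $\tilde\beta_t^K=\tilde V_t^K+\tilde M_t^K$ is uniformly bounded and $(\bar L+o(1))$‑Lipschitz in $x$, hence stays, for all $t\le T$, in a fixed compact subset of $\Co(\T,\R)$ (Arzel\`a--Ascoli), which gives the compact containment; moreover, since $\tilde V^K$ is Lipschitz in $t$ with a $K$‑independent constant (the drift being bounded by $\bar b+\bar d+\bar p\,\overline G(\bar L)$ once $\Lambda^K\le\bar L$) and $\sup_t\|\tilde M_t^K\|_\infty\le\e_K$, the time‑oscillation modulus of $\tilde\beta^K$ over intervals of length $\delta$ is at most $C\delta+2\e_K$; letting $K\to\infty$ and then $\delta\to0$ yields the required modulus control, hence the relative compactness.
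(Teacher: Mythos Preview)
Your overall strategy---compact containment via Arzel\`a--Ascoli plus a direct time-modulus control, with the Lipschitz-in-$x$ estimate obtained from an almost-sure maximum principle on the finite-variation part---matches the paper's, and the translation-invariance observation you use for $\Lambda^K$ is an elegant way to kill the principal part of the mutation difference. However, there is a genuine gap at the key step.

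The differential inequality you obtain, $\frac{d}{dt}\Lambda^K(t)\le C_1+C_2\,\overline G(\Lambda^K(t))+o(1)$ with $C_2=\|p\|_{\text{Lip}}$, does \emph{not} yield $\sup_{t\le T}\Lambda^K(t)\le\bar L$ for arbitrary $T>0$. Since $\overline G(\Lambda)=\int_\R e^{\Lambda|y|}G(y)\,dy$ grows at least exponentially in $\Lambda$ (like $e^{c\Lambda^2}$ for Gaussian $G$), the comparison ODE $\dot y=C_1+C_2\,\overline G(y)$ blows up in finite time; Gronwall therefore gives a bound only on a short interval depending on the data, not on $[0,T]$ for all $T$. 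Your translation-invariance argument correctly makes the part of the mutation difference proportional to $p$ nonpositive, but the Lipschitz correction from the variation of $p$ is, exactly as you write, of order $\|p\|_{\text{Lip}}\,\overline G(\Lambda^K)\,\rho(i^*\delta_K,j^*\delta_K)$, and this term cannot be closed by a Gronwall-type argument. Your scheme would work if $p$ were constant, but not in general.

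The paper handles this with a Bernstein-type trick: instead of $\Delta_K A^K_i$ alone it applies the maximum principle to $g^K_i=\Delta_K A^K_i+\frac{\|p\|_{\text{Lip}}}{\underline p}\,A^K_{i+1}$. The added term contributes, in the drift of $g^K_i$, a factor $-\log K\,(\beta^K_{\ell+i+1}-\beta^K_{i+1})\,\dfrac{N^K_{\ell+i+1}}{N^K_{i+1}}$ which, combined with the problematic $\|p\|_{\text{Lip}}$ term via the elementary inequality $e^x(3-x)\le e^2$, replaces the $\overline G(\Lambda^K)$ on the right-hand side by an absolute constant. One then shows $\max_i g^K_i(t)-Ct$ attains its maximum at $t=0$, which gives a $K$- and $T$-independent Lipschitz bound. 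You should either reproduce this device or find another mechanism preventing the exponential feedback of the variation of $p$ into the Lipschitz constant of $\beta^K$.
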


\begin{theorem}
\label{thm:identification}
The limiting values $\beta$ of  $(\tilde \beta^K_{t}, t\in[0,T])_{K}$ are characterized as {the} unique viscosity solution of the Hamilton-Jacobi equation
$$
\frac{\partial}{\partial t} \beta(t,x)= b(x)-d(x) + p(x) \int_{\R} G(h)e^{h\partial_x \beta(t,x)} dh.
$$
\end{theorem}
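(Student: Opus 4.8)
The plan is to work from the semimartingale decomposition of the discrete process $\beta^K_i$ and pass to the limit in the sense of viscosity solutions. First I would write, via Itô's formula applied to $t\mapsto \log N_i^K(t\log K)$, the decomposition $\beta^K_i(t) = \beta^K_i(0) + \int_0^t \Phi^K_i(s)\,ds + M^{K,i}_t$, where the finite-variation drift $\Phi^K_i(s)$ is, up to lower-order corrections coming from the discreteness of the jumps of $N_i^K$, of the form
\[
b(i\delta_K) - d(i\delta_K) + p(i\delta_K)\sum_{j} h_K\, G\big(\overline{(i-j)\delta_K}\log K\big)\, \frac{N^K_j(s\log K)}{N^K_i(s\log K)},
\]
and $M^{K,i}$ is a martingale. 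The crucial point, which I expect to be already available from the tightness proof (Theorem \ref{thm:tight}) and its underlying uniform Lipschitz estimates, is that $\widetilde\beta^K$ is uniformly (in $K$) Lipschitz in $x$ on $[0,T]$ with high probability, so that along a converging subsequence the limit $\beta(t,\cdot)$ is Lipschitz, and that the martingale part vanishes, $\sup_{[0,T]}\sup_i |M^{K,i}_t| \to 0$ in probability (careful control of the bracket, using \eqref{eq:conv_moment-expo} to bound the jump sizes and the super-criticality/lower-bound assumptions to avoid division by small populations). The ratio $N^K_j/N^K_i = K^{\beta^K_j - \beta^K_i}$ is then controlled on the lattice by $K^{\pm A'\rho(i\delta_K,j\delta_K)} = e^{\pm A' h_K |i-j|}$ for a uniform Lipschitz constant $A'$, which is exactly the quantity for which \eqref{eq:conv_moment-expo} guarantees summability.

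Next I would set up the viscosity-solution verification. Fix a smooth test function $\varphi$ and a point $(t_0,x_0)$ where $\beta - \varphi$ has a strict local maximum (the subsolution case; the supersolution case is symmetric). Using the uniform convergence of $\widetilde\beta^K$ to $\beta$ one localizes a near-maximum of $\beta^K_i(t) - \varphi(t,i\delta_K)$ at some $(t^K, i_K\delta_K) \to (t_0,x_0)$. At this discrete space-time maximum, a discrete "almost sure maximum principle" argument gives that the time-increment of $\beta^K_{i_K} - \varphi$ is $\le o(1)$, hence (after removing the vanishing martingale increment) $\partial_t\varphi(t^K,i_K\delta_K) \ge \Phi^K_{i_K}(t^K) - o(1)$; and the spatial maximality forces $\beta^K_{j} - \beta^K_{i_K} \le \varphi(\cdot,j\delta_K) - \varphi(\cdot,i_K\delta_K) + o(1)$, so that $N^K_j/N^K_{i_K} \le \exp\big(\log K\,[\varphi(\cdot,j\delta_K)-\varphi(\cdot,i_K\delta_K)] + o(\log K)\big)$. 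Plugging $j = i_K + \ell$, a Taylor expansion gives $\log K\,[\varphi(\cdot,(i_K+\ell)\delta_K)-\varphi(\cdot,i_K\delta_K)] \approx \ell\,h_K\,\partial_x\varphi(t_0,x_0)$, so the sum $\sum_j h_K G(\cdot)\,N^K_j/N^K_{i_K}$ is bounded above, via the Riemann-sum limit \eqref{eq:conv_moment-expo}/\eqref{eq:momentexpo:G}, by $\int_\R G(h)e^{h\partial_x\varphi(t_0,x_0)}\,dh + o(1)$. Combining, letting $K\to\infty$, yields
\[
\partial_t\varphi(t_0,x_0) \ge b(x_0)-d(x_0) + p(x_0)\int_\R G(h)e^{h\partial_x\varphi(t_0,x_0)}\,dh,
\]
i.e. $\beta$ is a viscosity subsolution; the reverse inequality at a local minimum gives the supersolution property.

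Finally, since any subsequential limit is a Lipschitz viscosity solution of \eqref{eq:HJ} with the prescribed initial datum $\beta_0$, and since the Hamiltonian $H(x,p)=b(x)-d(x)+p(x)\int_\R G(h)e^{hp}\,dh$ is continuous, convex and coercive in $p$ with Lipschitz dependence on $x$, the standard comparison principle for Hamilton--Jacobi equations on the torus yields uniqueness; hence all limiting values coincide with this unique solution, and combined with tightness this upgrades to convergence in probability. The main obstacle I anticipate is the rigorous bookkeeping in the discrete maximum-principle step: one must show that the spatial near-maximality of $\beta^K_i - \varphi(\cdot,i\delta_K)$ on the lattice survives the $\log K$ amplification when exponentiated into the ratio $N^K_j/N^K_{i_K}$ — i.e. that the $o(1)$ error in the Lipschitz comparison does not blow up to $o(\log K)$ uselessly — which is precisely why the uniform Lipschitz bound on $\widetilde\beta^K$ (so that the relevant range of $\ell$ contributing to the Riemann sum is effectively finite, uniformly in $K$) and the fine control of the martingale increments near the maximizing time are essential, rather than merely convenient.
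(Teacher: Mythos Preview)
Your approach is essentially the paper's: pass to an almost-sure setting (the paper does this explicitly via Skorokhod's representation theorem), localize the maximum of the approximate solution minus the test function, use the time-maximum to relate $\partial_t\varphi$ to the drift $\Phi^K_{i_K}$, use the space-maximum to bound the nonlocal term, and pass to the limit in the Riemann sum. One technical refinement in the paper that you should adopt: it takes the maximum of $\widetilde A^K-\varphi$ (the finite-variation part) rather than of $\widetilde\beta^K-\varphi$; this is cleaner because $\widetilde A^K$ is $C^1$ in time, so no jump bookkeeping is needed at the maximizing time.

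There is, however, a genuine inequality-direction error in your write-up that makes the argument fail as stated. At a local maximum of $\beta-\varphi$ the subsolution inequality to be proved is $\partial_t\varphi(t_0,x_0)\le H(x_0,\partial_x\varphi(t_0,x_0))$, not $\ge$. Your chain reads $\partial_t\varphi\ge\Phi^K_{i_K}-o(1)$ together with the spatial upper bound $\Phi^K_{i_K}\le H+o(1)$; these two inequalities do not combine to anything. The correct step, and the one the paper uses, takes the other side of the time-maximum: since $t\mapsto \widetilde A^K(t,x_K)-\varphi(t,x_K)$ is $C^1$ and attains its maximum at $t_K>0$, one has $\partial_t\varphi(t_K,x_K)\le\partial_t\widetilde A^K(t_K,x_K)=\Phi^K_{i_K}(t_K)$ (equality at an interior maximum, the inequality if $t_K=T$). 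This then chains correctly with the spatial-max upper bound on $\Phi^K_{i_K}$ to give $\partial_t\varphi\le H+o(1)$, which is the subsolution inequality. The supersolution case is the mirror image at a minimum. Your concern about the $o(1)$ error being amplified by $\log K$ is well placed; the paper controls this because the martingale bound $\varepsilon'_K=\delta_K^{-1/2}K^{-a/2}$ and the Taylor remainder $O(\ell^2\delta_K^2)$ both remain $o(1)$ after multiplication by $\log K$, thanks to Assumption~\ref{hypmodele}.4, and the uniform Lipschitz bound on $\widetilde\beta^K$ provides the domination needed to pass to the limit in the tails of the Riemann sum.
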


The proofs of these two results require to control the increments of the functions $\widetilde \beta^K_{t}(.)$.
This functions
can also be written as $$\widetilde
\beta^K_{t}(x) 
=(x-i\delta_K)\Delta_K\beta^K_i(t)+\beta^K_i(t).$$
From this expression, we observe that two technical steps are required: to estimate uniformly $\beta_{i}^K(t)$ and to control uniformly $\ \Delta_K\beta^K_i(t)=\frac{\beta_{i+1}^K(t)-\beta_{i}^K(t)}{\delta_K}\ $ (the second estimate being the harder part, it is a major difficulty and constitutes the technical interest of the paper). Such estimates are also obtained in the deterministic derivation of Hamilton-Jacobi equations of type \eqref{eq:HJ} from parabolic integro-differential equations using the maximum principle and the Bernstein method which consists in applying again the maximum principle to the equation satisfied by the increments (see \cite{Barles2009}). Here, since we have stochastic processes we cannot apply the Bernstein method directly. Using the Doob-Meyer decomposition, the stochastic processes can be separated into a finite variation part and a martingale part. We show indeed that the martingale part remains small with our rescaling and we apply the maximum principle almost surely on the finite variation part.

\medskip
Let us detail now the semimartingale Doob-Meyer decomposition of the processes $ \beta_{i}^K,\ i=0,\cdots, 1/\delta_{K}-1$, which can easily be deduced from the Doob-Meyer decomposition of the semi-martingales $N^K_{i},\ i=0,\cdots, 1/\delta_{K}-1$.

\smallskip We have
\begin{equation}\label{decomposition}
\beta_{i}^K(t)=M^K_{i}(t) + A^K_{i}(t)
\end{equation}
with
\begin{align} A_{i}^K(t)&= \beta_{i}^K(0) + {1\over \log K}\int_{0}^{t\log K}\Big(b(i  \delta_{K})N^K_{i}(s) \log\left(1+{1\over N^K_{i}(s)}\right) \label{def:A}\\
& \hspace{5cm}+  d(i  \delta_{K})N^K_{i}(s) \log\left(1-{1\over N^K_{i}(s)}\right)\Big) ds \nonumber\\
& +{1\over \log K} \sum_{\ell=-\lfloor 1/2\delta_K\rfloor}^{1/\delta_K-1-\lfloor 1/2\delta_K\rfloor}
 h_{K}\, p((i+\ell)  \delta_{K}) G(h_{K}\ell)\int_{0}^{t\log K}N^K_{i+\ell}(s)\log\left(1+{1\over N^K_{i}(s)}\right) ds ,\nonumber
\end{align}
with the conventions that, when the index $j\notin \{0,\dots, 1/\delta_K-1\}$,
\begin{align}
    \label{eq:convention}
     & N^K_{j}=N^K_{j-\lfloor j\delta_K\rfloor/
\delta_K}\quad\text{and}\quad p(j\delta_K)=p((j-\lfloor j\delta_K\rfloor/
\delta_K)\delta_K),\qquad \mbox{ when }j\geq 1/\delta_K,\\
& N^K_{j}=N^K_{j+\lceil | j|\delta_K\rceil/
\delta_K}\quad\text{and}\quad p(j\delta_K)=p((j+\lceil |j|\delta_K\rceil/
\delta_K)\delta_K),\qquad \mbox{ when }j<0.
\end{align}
The  process $M^K_{i}$ is a local martingale with quadratic variation
\begin{align}
\langle M^K_{i}\rangle_{t}& =  {1\over \log^2 K}\int_{0}^{t\log K}\Big(b(i  \delta_{K})N^K_{i}(s)\log^2\Big(1+{1\over N^K_{i}(s)}\Big) \label{def:M}\\
& \qquad\qquad\qquad + d(i  \delta_{K})N^K_{i}(s)\log^2\Big(1-{1\over N^K_{i}(s)}\Big)\Big) ds \nonumber\\
& +{1\over \log^2 K}\sum_{\ell=-\lfloor 1/2\delta_K\rfloor}^{1/\delta_K-1-\lfloor 1/2\delta_K\rfloor}
 h_{K} p((i+\ell)  \delta_{K}) G(h_{K}\ell)\int_{0}^{t\log K}N^K_{i+\ell}(s)\log^2\left(1+{1\over N^K_{i}(s)}\right) ds.\nonumber
\end{align}

\bigskip




In Section \ref{sec:main-steps}, we will prove  technical uniform estimates on the martingale part $\Delta_K M^K_{i}(t)$ and the finite variation part $\Delta_K A^K_{i}(t)$ of the processes $\Delta_K\beta^K_{i}(t)$. In that aim, let us introduce  sequences of stopping times playing an important role in the proofs.

Let $a\in(a_2,a_1)$ be fixed during the rest of the proof, $a_1$ being defined in \eqref{hyp:condinit} and $a_2$ in\eqref{hyp:deltaK}. For any $K$,  we define
\begin{equation}
\label{theta}
    \tau'_{K} = \inf\Big\{t\ge 0, \exists i \in \{0, 1, \cdots, {1\over \delta_K}-1\} ; N^K_i(t\log K) < K^a\Big\}.
\end{equation}
Recall Assumption~\ref{hyp:condinit} on the initial condition. It implies that $N^K_{i}(t\log K)\geq 1$ for all $t\le\tau'_K$.

For all  $L>0$, we also define
\begin{equation} \label{def:tau}
\tau_K=\tau_K(L)=\inf\left\{t\geq 0:\exists i\in \{0, 1, \cdots, {1\over \delta_K}-1\},\,|\beta^K_{i+1}(t)-\beta^K_i(t)|>L\delta_K \right\},
\end{equation}with the usual convention that $\beta^K_{1/\delta_K}=\beta^K_0$.
It is easy to check that
\begin{equation*} 
\tau_K(L)=\inf\left\{t\geq 0:\exists i,j\in \{0, 1, \cdots, {1\over \delta_K}-1\},\,|\beta^K_{i}(t)-\beta^K_j(t)|>L\rho(i\delta_K,j\delta_K) \right\}
\end{equation*}
and
\begin{equation} \label{def:tau2}
\tau_K(L)=\inf\left\{t\geq 0:\exists x,y\in \T,\,|\widetilde{\beta}^K_{t}(x)-\widetilde{\beta}^K_t(y)|>L\ \rho(x,y) \right\}.
\end{equation}

We will study the processes until the stopping time
 \begin{equation}
  \theta_K(L)=\tau_K(L)\wedge \tau'_{K}.\label{def:thetaK(L)}
  \end{equation}
Before the stopping time $\theta_K(L)$, the functions $\widetilde{\beta}^K_t$ are Lipschitz and the population size of each trait is controlled by $K^a$, by definition. For each $L$ fixed, we will  provide uniform estimates on the martingale parts $M^K_{i}(t)$ and $\Delta_K M^K_{i}(t)$ and the finite variation parts $A^K_{i}(t)$ and $\Delta_K A^K_{i}(t)$ of the processes $\beta^K_{i}(t)$ and  $\Delta_K\beta^K_{i}(t)$, before the stopping time $\theta_K(L)$. This will allow us to prove the next lemma:
\begin{lemma}
\label{lem:theta}
For all $T>0$, there exists $L_0>0$ large enough, such that
 \[
 \lim_{K\rightarrow+\infty}\mathbb{P}(\theta_K(L_0)>T)=1.
 \]
\end{lemma}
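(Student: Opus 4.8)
\textbf{Proof plan for Lemma~\ref{lem:theta}.}
The plan is to show that, for $L_0$ large enough, the two events $\{\tau_K(L_0)\le T\}$ and $\{\tau'_K\le T\}$ both have probability tending to $0$, so that $\theta_K(L_0)>T$ with probability tending to $1$. The core of the argument is a self-improving (bootstrap) estimate on the discrete spatial increments $\Delta_K\beta^K_i$, in the spirit of the Bernstein method, but carried out almost surely on the finite variation part $A^K_i$ while the martingale part is controlled separately. Throughout, one works up to the stopping time $\theta_K(L)$ itself, so that all populations are $\ge K^a\ge 1$ and all increments are bounded by $L\delta_K$; the goal is to show that, \emph{on the time interval $[0,\theta_K(L)\wedge T]$}, the increments are in fact bounded by, say, $L/2\cdot\delta_K$ plus a term that vanishes, which by a continuity/contradiction argument forces $\theta_K(L)>T$ with high probability once $L=L_0$ is fixed appropriately in terms of $A$ (from Assumption~\ref{hypconvergence}), $T$, and the data $b,d,p,G$.

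First I would establish the martingale estimates referred to in the text: using the expression \eqref{def:M} for $\langle M^K_i\rangle$ together with the elementary bound $N\log^2(1+1/N)\le 1/N\le K^{-a}$ valid on $[0,\theta_K(L)]$, and the uniform bound \eqref{eq:conv_moment-expo} to sum the mutation contributions, one gets $\langle M^K_i\rangle_{t\wedge\theta_K(L)}\lesssim T K^{-a}/\log K$ uniformly in $i$; similarly for the discrete gradient $\Delta_K M^K_i$, whose bracket picks up an extra factor $\delta_K^{-2}$ but still behaves like $K^{-a}\delta_K^{-2}/\log K = K^{-a}(\log K/ h_K)^2 K^{-a\cdot 0}/\log K$, which by Assumption~\ref{hyp:deltaK} (namely $\delta_K\gg K^{-a_2/4}$ and $a>a_2$) is $\ll K^{-a/2}\to 0$. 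A Doob $L^2$ maximal inequality, combined with a union bound over the $1/\delta_K\le K^{a_2/4}$ indices $i$, then gives $\mathbb{P}(\sup_{t\le\theta_K(L)\wedge T}\max_i|\Delta_K M^K_i(t)|>\eta_K)\to 0$ for a suitable deterministic $\eta_K\to 0$. This is the ``careful control of the martingale part''.

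Next comes the deterministic heart of the argument on $A^K_i$. Writing the evolution equation for the discrete gradient $\Delta_K A^K_i$ obtained by differencing \eqref{def:A}, one identifies a term of the form (discrete transport/diffusion acting on $\Delta_K\beta^K$) plus a source term coming from the Lipschitz regularity of $b$, $d$, $p$ and the mutation kernel. On $[0,\theta_K(L)]$ one has the key estimates $\log(1+1/N)=1/N+O(1/N^2)$ and $N^K_{i+\ell}/N^K_i = K^{\beta^K_{i+\ell}-\beta^K_i}\le e^{L\log K\,\rho((i+\ell)\delta_K,i\delta_K)}= e^{L h_K|\ell|}$, so the mutation sum is dominated using exactly \eqref{eq:conv_moment-expo} with $\alpha=L$, yielding $\overline G(L)<\infty$. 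One then applies the discrete (almost sure) maximum principle to the function $i\mapsto\Delta_K A^K_i(t)$: at a point and time where $|\Delta_K A^K_i|$ is maximal, the transport/diffusion term has a favorable sign, and one is left with a Gronwall inequality $\frac{d}{dt}\max_i|\Delta_K A^K_i|\le C_1 + C_2\max_i|\Delta_K A^K_i|$ where $C_1,C_2$ depend on $\|b\|_{\mathrm{Lip}},\|d\|_{\mathrm{Lip}},\|p\|_{\mathrm{Lip}},\overline G(L),\overline b,\overline p$ but, crucially, the part of $C_2$ coming from the $\Delta_K\beta^K$-weighted mutation term can be absorbed because $h_K\to 0$. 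This yields $\max_i|\Delta_K A^K_i(t)|\le (A+ o(1))e^{C T}=:\tfrac12 L_0$ on $[0,\theta_K(L_0)\wedge T]$, provided $L_0$ is chosen larger than $2(A+1)e^{CT}$ and $K$ is large, where one also uses the hypothesis that $\widetilde\beta^K_0$ converges so that the initial gradient is $\le A+o(1)$ with high probability.

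Finally I would close the loop. On the high-probability event where $\max_i|\Delta_K M^K_i|\le\eta_K$ on $[0,\theta_K(L_0)\wedge T]$ and the initial-gradient bound holds, we get $\max_i|\Delta_K\beta^K_i(t)|\le \tfrac12 L_0 + \eta_K < L_0$ for all $t\le\theta_K(L_0)\wedge T$; by right-continuity of $t\mapsto\max_i|\beta^K_{i+1}(t)-\beta^K_i(t)|/\delta_K$ this is incompatible with $\tau_K(L_0)\le T$, so on that event $\tau_K(L_0)>T$. It remains to rule out $\tau'_K\le T$: a separate, easier argument controls $\beta^K_i$ itself (not its gradient) — from \eqref{def:A} one gets $|A^K_i(t)-\beta^K_i(0)|\le (\overline b+\overline d+\overline p+o(1))\,t$ via $N\log(1\pm1/N)=\pm1+O(1/N)$ and $\eqref{eq:conv_integrale}$, and the corresponding martingale is small as above, so $\beta^K_i(t)\ge \beta^K_i(0)-C'T-o(1)\ge a_1 - C'T - o(1)$; this stays $>a$ on $[0,T]$ if we had chosen $a$ close enough to $a_1$ — but since $a\in(a_2,a_1)$ is fixed, one instead argues that $\tau'_K$ can only occur after $\tau_K(L_0)$ has been avoided for a controlled time, or simply restricts to a time horizon and notes $a_1-C'T$ may be $<a$; the cleaner route, which the paper presumably takes, is to prove the gradient and lower-bound estimates simultaneously up to $\theta_K(L_0)$ and observe that on the good event neither stopping time can be $\le T$. \textbf{The main obstacle} is the discrete maximum principle step for $\Delta_K A^K_i$: one must check that the discretized transport term (with the non-symmetric, torus-wrapped mutation kernel and the conventions \eqref{eq:convention}) genuinely has the right sign at an interior maximum of the gradient, and that the error from replacing $\log(1+1/N)$ by $1/N$, summed against $G(h_K\ell)e^{Lh_K|\ell|}$, is negligible uniformly in $i$ — this is where Assumption~\ref{hypmodele}.2 on the monotone tails of $G$ and the finite exponential moments, and Assumption~\ref{hyp:deltaK} relating $\delta_K$, $K^{-a_2}$ and $1/\log K$, are all used in concert.
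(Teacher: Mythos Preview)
Your overall architecture---controlling the martingale parts uniformly and applying an almost-sure maximum principle to the finite variation parts---matches the paper's. However, there are two genuine gaps.

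First, your treatment of $\tau'_K$ does not close for arbitrary $T$: the crude bound $\beta^K_i(t)\ge a_1-C'T$ fails once $T$ is large, as you yourself note, and the ``simultaneous'' route you guess at is not what the paper does. The paper handles $\tau'_K$ by an entirely separate coupling argument (Lemma~\ref{lem:lower-bound}): each $N^K_i$ stochastically dominates a super-critical branching process $Z^K_i$ with birth rate $b(i\delta_K)$, death rate $d(i\delta_K)$, no immigration, started from $K^{a_1}$; the probability that $Z^K_i$ ever falls below $K^a$ is $(d(i\delta_K)/b(i\delta_K))^{K^{a_1}-K^a}$ by a gambler's-ruin computation, and a union bound over $1/\delta_K$ indices gives $\mathbb{P}(\tau'_K=+\infty)\to 1$. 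This uses the super-criticality hypothesis $b>d$ essentially and avoids any circularity with $\theta_K(L)$.

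Second, and more seriously, your Gronwall step for $\max_i|\Delta_K A^K_i|$ is circular. After the transport term is killed at the maximum, the Lipschitz-of-$p$ contribution
\[
\frac1{\delta_K}\sum_\ell h_K G(h_K\ell)\big|p((i{+}1{+}\ell)\delta_K)-p((i{+}\ell)\delta_K)\big|\,\frac{N^K_{\ell+i+1}}{N^K_{i+1}}
\]
is of order $\|p\|_{\mathrm{Lip}}\overline G(L)$ and does \emph{not} vanish as $K\to\infty$ (the $\delta_K$ from the Lipschitz increment cancels the $1/\delta_K$ exactly; there is no leftover $h_K$). Your constants $C_1,C_2$ thus depend on $\overline G(L)$, and since $\overline G$ grows super-linearly (for Gaussian $G$, like $e^{L^2\sigma^2/2}$), the resulting requirement $L_0\gtrsim A+T\,\overline G(L_0)$ has no solution for large $T$. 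The paper breaks this circularity by running the maximum principle not on $\Delta_K A^K_i$ but on the auxiliary quantity $g^K_i=\Delta_K A^K_i+\frac{\|p\|_{\mathrm{Lip}}}{\underline p}A^K_{i+1}$: after the linearization $e^\lambda\le e^\alpha+e^\lambda(\lambda-\alpha)$, the added $A^K_{i+1}$-term produces a factor $-\log K\,(\beta^K_{\ell+i+1}-\beta^K_{i+1})\cdot N^K_{\ell+i+1}/N^K_{i+1}$ which combines with the Lipschitz-of-$p$ term via the elementary inequality $e^x(3-x)\le e^2$, replacing the $\overline G(L)$ factor by a universal constant. This yields $L_0=C(A+\|\beta_0\|_\infty+3+T)$, linear in $T$, with all remaining $\overline G(L_0)$-dependence pushed into error terms that vanish with $K$.
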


An expression for $L_0$ will be given in \eqref{eq:L0} in the proof.

\section{Proof of the  tightness }
\label{sec:main-steps}

We will use the criterion of Theorem 3.1 in Jakubowski \cite{jakubowski}: let us consider the set $\mathbb{F}$ of  functions $F_{f}$, for $f\in \Co(\T,\mathbb{R})$, defined on $\Co(\T,\mathbb{R})$ by
$$\forall g\in \Co(\T,\mathbb{R}),\  F_{f}(g) =\int_{\T} f(x)g(x)dx.$$
We have to prove  that $\mathbb{F}$ satisfies the required properties:

(i) For each $\varepsilon>0$, there exists a compact set $C_{\varepsilon}\subset \Co(\T,\mathbb{R})$ such that
$$\forall K,\ \mathbb{P}\left(\widetilde \beta^K\in \mathbb{D}([0,T], C_{\varepsilon})\right) >1-\varepsilon.$$
(ii)  For each $f\in \Co(\T,\mathbb{R})$, the sequence of laws of  real-valued processes
\begin{equation}\label{def-Xf}X^K_{f}(\cdot) = \int_\T \widetilde \beta^K(.,x) f(x) dx\end{equation}
is 
tight.

\medskip
Point (i) is the hard part of the proof. Using Ascoli's characterization of compact subsets of $\Co(\T,\mathbb{R})$, we need to obtain estimates related to equi-boundedness and to equi-continuity for the processes $\widetilde \beta^K_{t}(.)$.
The proof relies on Lipschitz estimates (in $x$) of the functions $\widetilde{\beta}^K_t$. In Section \ref{sec:control-martingale}, we show that the martingale part of $\beta^K_i$ remains small with our rescaling and in Section \ref{sec:control-fv}, we apply the maximum principle almost surely on the finite variation part of $\beta^K_i$. This allows us to prove Lemma \ref{lem:theta} in Section \ref{sec:control-incr}. The proof of the tightness is ended in Section \ref{sec:tight}.

\subsection{Control of the martingale part}\label{sec:control-martingale}

Our first estimate will be useful to prove the tightness of the laws of $\widetilde{\beta}^K$. In the sequel, $C$ denotes a constant not depending on any parameter and that may change from line to line.

The following estimate will be used repeatedly: by~\eqref{def:tau2}, for all $t\leq\tau_K(L)$ and all $i,j\leq 1/\delta_K-1$,
\begin{equation}
\label{eq:borne-N_i/N_j}
\frac{N^K_j(t\log K)}{N^K_{i}(t\log K)} = \exp(\log K (\beta_{j}(t)-\beta_{i}(t)) \le  e^{ L\, \rho(j \delta_K,i \delta_K)\, \log K }.
\end{equation}

\begin{lemma}
\label{lem:M}
For all $T>0$, there exists a constant $C$ independent of $K$, $T$, $L$ and $i$ such that, almost surely, for all $t\leq T$ and all $i\in\{0,\ldots,1/\delta_K-1\}$,
\begin{equation}
\label{eq:lem-M-1}
    \langle M^K_i\rangle_{t\wedge\theta_K(L)}\leq \left(\frac{C\overline{G}(L)}{K^a\log K}\right)t.
\end{equation}
In particular,
\begin{equation}
\label{eq:lem-M-2}
    \mathbb{E}\Big(\sup_{t\le T\wedge \theta_{K}(L)}\sup_{i}\langle M^K_{i}\rangle_{t}\Big) \le  {C\overline{G}(L)T \over K^a\log K}
\end{equation}
and for all $A>0$,
\begin{equation}
    \label{maison}
    \mathbb{P}\Big(\sup_{t\le T\wedge \theta_{K}(L)}\sup_{i}|M^K_i(t)|\geq A\Big)\leq\frac{C\overline{G}(L)T}{A^2\delta_K K^a\log K}.
\end{equation}
\end{lemma}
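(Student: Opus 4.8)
The plan is to control the predictable quadratic variation $\langle M^K_i\rangle$ first, using the explicit expression~\eqref{def:M}, and then deduce the $L^2$-bound~\eqref{eq:lem-M-2} and the maximal inequality~\eqref{maison} by standard martingale arguments. For~\eqref{eq:lem-M-1}, I would bound each of the three contributions to $\langle M^K_i\rangle_t$ separately. For the two ``clonal'' terms, I use that $N\log^2(1+1/N)\le C/N$ and $N\log^2(1-1/N)\le C/N$ for $N\ge 1$ (with the convention that the death term vanishes when $N=1$), together with the fact that before $\theta_K(L)\le\tau'_K$ one has $N^K_i(s)\ge K^a$; this gives a contribution bounded by $\frac{C(\bar b+\bar d)}{\log^2 K}\cdot\frac{t\log K}{K^a}=\frac{Ct}{K^a\log K}$. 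For the mutation term, I first use $N^K_{i+\ell}(s)\log^2(1+1/N^K_i(s))\le \frac{N^K_{i+\ell}(s)}{(N^K_i(s))^2}$ for $N^K_i(s)\ge1$, then apply the ratio bound~\eqref{eq:borne-N_i/N_j}, valid for $s\le\tau_K(L)$, to get $\frac{N^K_{i+\ell}(s)}{N^K_i(s)}\le e^{L\rho((i+\ell)\delta_K,i\delta_K)\log K}$; this leaves $\frac{1}{N^K_i(s)}\le K^{-a}$ times the Riemann sum $\sum_\ell h_K\,p((i+\ell)\delta_K)\,G(h_K\ell)\,e^{L\rho\log K}$, which is bounded uniformly in $K$ and $i$ by $\bar p\,\overline G(L)$ thanks to~\eqref{eq:conv_moment-expo}. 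Putting the three pieces together and integrating over $[0,t\log K]$ yields~\eqref{eq:lem-M-1} with the stated dependence on $L$ through $\overline G(L)$.

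Inequality~\eqref{eq:lem-M-2} then follows immediately: since $t\mapsto\langle M^K_i\rangle_t$ is nondecreasing, $\sup_{t\le T\wedge\theta_K(L)}\langle M^K_i\rangle_t=\langle M^K_i\rangle_{T\wedge\theta_K(L)}\le \frac{C\overline G(L)T}{K^a\log K}$ almost surely, uniformly in $i$, so the same bound holds for the expectation of the supremum over $i$ and $t$. For~\eqref{maison}, I would localize: because $N^K_i(s)\ge1$ for $s\le\tau'_K$, the local martingale $M^K_{i}(\cdot\wedge\theta_K(L))$ is in fact a true (square-integrable) martingale (its jumps are bounded by $C/(N^K_i\log K)\le C/\log K$ and its bracket is bounded), so Doob's $L^2$ maximal inequality gives $\mathbb{E}\big(\sup_{t\le T\wedge\theta_K(L)}|M^K_i(t)|^2\big)\le 4\,\mathbb{E}\langle M^K_i\rangle_{T\wedge\theta_K(L)}\le \frac{4C\overline G(L)T}{K^a\log K}$. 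Then a union bound over the $1/\delta_K$ indices $i$ together with Markov's inequality yields $\mathbb{P}(\sup_{t,i}|M^K_i(t)|\ge A)\le \frac{1}{\delta_K}\cdot\frac{1}{A^2}\cdot\frac{4C\overline G(L)T}{K^a\log K}$, which is~\eqref{maison} (absorbing the factor $4$ into $C$).

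The main obstacle is the mutation term in $\langle M^K_i\rangle$: one must simultaneously exploit the lower bound $N^K_i\ge K^a$ coming from $\tau'_K$ and the near-flatness $N^K_{i+\ell}/N^K_i\le e^{L\rho\log K}$ coming from $\tau_K(L)$ in order to turn the raw sum $\sum_\ell h_K p G(h_K\ell)N^K_{i+\ell}(s)$ — which is a priori much larger than $N^K_i(s)$ because of the exponential growth of the trait profile allowed by the Lipschitz constant $L$ — into something of order $K^{-a}$ times the finite constant $\overline G(L)$. The key point is that the ratio bound converts the spatial factor into exactly the exponential weight appearing in the definition~\eqref{eq:conv_moment-expo} of $\overline G(L)$, so the sum stays uniformly bounded; everything else (the elementary inequalities on $N\log^2(1\pm1/N)$, Doob's inequality, the union bound) is routine. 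One minor care point is the factor $1/\delta_K$ produced by the union bound in~\eqref{maison}: it is harmless here because it will later be compared against $K^a\log K$ using Assumption~\ref{hypmodele}.4, but within this lemma it simply appears in the stated bound.
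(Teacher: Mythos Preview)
Your proposal is correct and follows essentially the same route as the paper: bound $\langle M^K_i\rangle$ from~\eqref{def:M} using $N\log^2(1\pm 1/N)\le C/N$, the lower bound $N^K_i\ge K^a$ from $\tau'_K$, the ratio bound~\eqref{eq:borne-N_i/N_j} from $\tau_K(L)$, and the uniform Riemann-sum estimate~\eqref{eq:conv_moment-expo}; then derive~\eqref{maison} via a union bound over the $1/\delta_K$ indices combined with Markov's and Doob's inequalities. Your write-up is in fact somewhat more detailed than the paper's (you make explicit why the stopped local martingale is a true square-integrable martingale), but the arguments are the same.
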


\begin{proof}
It follows from~\eqref{def:M} that
\begin{multline*}
\langle M^K_i\rangle_{t\wedge\theta_K}\leq \frac{C(\bar b+\bar d)}{\log^2 K}\int_0^{(t\wedge \theta_K)\log K}\frac{ds}{N^K_i(s)}\\
+\frac{\bar p}{\log^2 K}\int_0^{(t\wedge\theta_K)\log K}
\sum_{\ell=-\lfloor 1/2\delta_K\rfloor}^{1/\delta_K-1-\lfloor 1/2\delta_K\rfloor}
 h_{K} G(h_{K}\ell)\frac{N^K_{i+\ell}(s)}{(N^K_{i}(s))^2}\,ds.
\end{multline*}
Therefore, using~\eqref{theta} and~\eqref{eq:borne-N_i/N_j},~\eqref{eq:lem-M-1} follows. Then
\begin{align*}
    \mathbb{P}\Big(\sup_{t\le T\wedge \theta_{K}}\sup_{i}|M^K_i(t)|\geq A\Big) & \leq\sum_{i=0}^{1/\delta_K-1}\mathbb{P}\Big(\sup_{t\le T\wedge \theta_{K}}|M^K_i(t)|\geq A\Big) \\ \leq & \frac{1}{A^2}\sum_{i=0}^{1/\delta_K-1}\mathbb{E}\Big(\sup_{t\le T\wedge \theta_{K}}|M^K_i(t)|^2\Big),
\end{align*}
so we obtain~\eqref{maison} using Doob's inequality and~\eqref{eq:lem-M-1}.
\end{proof}


The second step is a technical lemma that we state now.

\begin{lemma}
\label{lem:mg-max-ineq}
Let $t\le T$. Then, for any $\varepsilon>0$ and any $i\in\{0,\ldots,1/\delta_K-1\}$
\begin{equation}
\label{eq:mg-max-ineq}
    \mathbb{P}\Big(\sup_{s\le t\wedge \theta_K(L) } |\Delta_KM^K_{i}(s)|>\varepsilon\Big)\leq  {C\over \varepsilon} \sqrt{ \frac{ \overline{G}(L) t}{\delta_K^2 K^{a}\log K} },
\end{equation}
where the constant $\overline{G}(L)$ is defined in~\eqref{eq:conv_moment-expo}.
\end{lemma}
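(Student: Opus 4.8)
The plan is to bound $\Delta_K M^K_i = (M^K_{i+1} - M^K_i)/\delta_K$ by controlling its quadratic variation and then applying Doob's $L^2$-inequality, in the same spirit as Lemma~\ref{lem:M}. First I would write $\Delta_K M^K_i$ explicitly: since $M^K_{i+1}$ and $M^K_i$ are driven by \emph{different} Poisson point processes (births, deaths, and mutations are independent across distinct traits), the two local martingales are orthogonal, and hence
\[
\langle \Delta_K M^K_i\rangle_t = \frac{1}{\delta_K^2}\Big(\langle M^K_{i+1}\rangle_t + \langle M^K_i\rangle_t\Big).
\]
Applying the bound \eqref{eq:lem-M-1} from Lemma~\ref{lem:M} to each of the two terms on the right, I get, for all $t \le T$,
\[
\langle \Delta_K M^K_i\rangle_{t\wedge\theta_K(L)} \le \frac{2}{\delta_K^2}\cdot\frac{C\overline{G}(L)}{K^a\log K}\, t = \frac{C'\overline{G}(L)\,t}{\delta_K^2 K^a\log K}.
\]

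Next I would upgrade this quadratic-variation bound to a bound on the supremum of $|\Delta_K M^K_i|$ itself. By the Burkholder--Davis--Gundy inequality (or simply Doob's $L^2$ maximal inequality applied to the $L^2$-martingale obtained after localization, using the fact that the jumps of $M^K_i$ are at most $\frac{1}{\log K}\log(1+1/N^K_i) \le \frac{1}{\log K}$ and hence uniformly bounded, so the local martingale is genuinely square-integrable up to $\theta_K(L)$), I have
\[
\mathbb{E}\Big(\sup_{s\le t\wedge\theta_K(L)}|\Delta_K M^K_i(s)|^2\Big) \le C\,\mathbb{E}\big(\langle \Delta_K M^K_i\rangle_{t\wedge\theta_K(L)}\big) \le \frac{C\overline{G}(L)\,t}{\delta_K^2 K^a\log K}.
\]
Then Markov's inequality applied to $\sup_{s\le t\wedge\theta_K(L)}|\Delta_K M^K_i(s)|$ gives
\[
\mathbb{P}\Big(\sup_{s\le t\wedge\theta_K(L)}|\Delta_K M^K_i(s)|>\varepsilon\Big) \le \frac{1}{\varepsilon^2}\,\mathbb{E}\Big(\sup_{s\le t\wedge\theta_K(L)}|\Delta_K M^K_i(s)|^2\Big) \le \frac{C\overline{G}(L)\,t}{\varepsilon^2\,\delta_K^2 K^a\log K},
\]
which is already of the form claimed, up to the placement of the square root and the power of $\varepsilon$. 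To land exactly on \eqref{eq:mg-max-ineq} one should instead apply Markov at the level of $|\Delta_K M^K_i|$ rather than its square, i.e. use $\mathbb{P}(\,\cdot > \varepsilon) \le \varepsilon^{-1}\mathbb{E}(\sup|\Delta_K M^K_i|)$ together with $\mathbb{E}(\sup|\Delta_K M^K_i|) \le (\mathbb{E}(\sup|\Delta_K M^K_i|^2))^{1/2}$ by Cauchy--Schwarz; the square root then appears naturally and the exponent of $\varepsilon$ becomes $1$, matching the statement.

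The only genuinely delicate point is the justification that $\langle M^K_{i+1}\rangle$ and $\langle M^K_i\rangle$ simply add up, i.e. that $\langle M^K_{i+1}, M^K_i\rangle_t = 0$; this is where the independence of the driving Poisson measures for distinct traits is used — a birth, death or mutation event changes $N^K_{i+1}$ and $N^K_i$ through disjoint sets of jump times, so there is no common jump and the predictable bracket of the cross term vanishes. Everything else is a routine combination of \eqref{eq:lem-M-1}, the bound $|\log(1+1/N)|\le 1/N$ and $|\log(1-1/N)|\le 2/N$ for $N\ge 2$ ensuring square-integrability after localization at $\theta_K(L)\le\tau'_K$, Doob/BDG, and Markov. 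I expect no real obstacle beyond bookkeeping the constants so that the final dependence on $L$ enters only through $\overline{G}(L)$ and the powers of $\delta_K$, $K^a$ and $\log K$ come out as in \eqref{eq:mg-max-ineq}.
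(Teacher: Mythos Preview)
Your proof is correct and follows essentially the same route as the paper: bound the bracket of $\Delta_K M^K_i$ by $\delta_K^{-2}$ times the sum of the individual brackets via Lemma~\ref{lem:M}, then combine Doob's maximal inequality with Cauchy--Schwarz to produce the $\varepsilon^{-1}$ and the square root. The only cosmetic difference is that the paper does not invoke orthogonality of $M^K_i$ and $M^K_{i+1}$ but simply uses the crude estimate $\langle M^K_{i+1}-M^K_i\rangle\le 2(\langle M^K_{i+1}\rangle+\langle M^K_i\rangle)$, which spares you the (correct, but unnecessary) discussion of disjoint jump times.
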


\begin{proof}
Let $\varepsilon>0$. By the submartingale maximal lemma, we have that \[ \mathbb{P}\Big(\sup_{s\le t} |\Delta_KM^K_{i}(s)|>\varepsilon\Big)\leq  {1\over \varepsilon}\mathbb{E}( |\Delta_KM^K_{i}(t)|) \leq  {1\over \varepsilon} \mathbb{E}\big( |\Delta_KM^K_{i}(t)|^2\big)^{1/2}.\]
Now, Lemma~\ref{lem:M} yields
 \begin{align*}
 \langle\Delta_KM^K_{i} \rangle_{t\wedge \theta_K}
 \leq \frac{2}{\delta_K^2}(\langle M^K_{i+1} \rangle_{t\wedge \theta_K}  + \langle M^K_{i} \rangle_{t\wedge \theta_K} )
    \leq \frac{ 4 C\overline{G}(L) t}{\delta_K^2 K^{a}\log K}.
  \end{align*}
%
 Hence~\eqref{eq:mg-max-ineq} and thus 
 Lemma~\ref{lem:mg-max-ineq} are proved.\end{proof}

 \medskip
Using a similar argument as in the proof of Lemma~\ref{lem:M}, we have the following result.

\begin{corollary}\label{corol:accrM}
 Let $T>0$ and $\varepsilon_{K}= \delta_K^{-1}( K^{a}\log K)^{-1/4}$. We define the event
 \[
 \Omega_{K}(L)=\left\{\sup_{0\leq i\leq 1/\delta_K-1,\ t\le T\wedge \theta_K(L)}  |\Delta_KM^K_{i}(t)| \le \varepsilon_{K}\right\}.
 \]
 Then there exists a constant $C>0$ such that
 $$\mathbb{P}(\Omega_K^c(L))
\leq \frac{C\sqrt{\overline{G}(L)T}}{\delta_K\varepsilon_K \delta_K \sqrt{K^{a}\log K}}= \frac{C\sqrt{\overline{G}(L)T}}{\delta_K (K^a
\log K)^{1/4}} .$$
 \end{corollary}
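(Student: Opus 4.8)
The statement is nothing more than a union bound over the $1/\delta_K$ indices $i$ combined with Lemma~\ref{lem:mg-max-ineq}, followed by substitution of the value of $\varepsilon_K$; the real work has already been done in Lemmas~\ref{lem:M} and~\ref{lem:mg-max-ineq}. First I would note that, since the index set $\{0,\dots,1/\delta_K-1\}$ is finite,
\[
\Omega_K^c(L)=\bigcup_{i=0}^{1/\delta_K-1}\Big\{\sup_{t\le T\wedge\theta_K(L)}|\Delta_K M^K_i(t)|>\varepsilon_K\Big\},
\]
so that by subadditivity
\[
\mathbb{P}(\Omega_K^c(L))\le\sum_{i=0}^{1/\delta_K-1}\mathbb{P}\Big(\sup_{t\le T\wedge\theta_K(L)}|\Delta_K M^K_i(t)|>\varepsilon_K\Big).
\]

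Next I would apply Lemma~\ref{lem:mg-max-ineq} with $t=T$ and $\varepsilon=\varepsilon_K$ to each of the $1/\delta_K$ summands, each of which is therefore bounded by $\frac{C}{\varepsilon_K}\sqrt{\overline{G}(L)T/(\delta_K^2 K^a\log K)}$. Summing the $1/\delta_K$ identical bounds gives
\[
\mathbb{P}(\Omega_K^c(L))\le\frac{1}{\delta_K}\cdot\frac{C}{\varepsilon_K}\sqrt{\frac{\overline{G}(L)T}{\delta_K^2 K^a\log K}}=\frac{C\sqrt{\overline{G}(L)T}}{\delta_K\,\varepsilon_K\,\delta_K\sqrt{K^a\log K}},
\]
which is the first claimed expression. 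Finally, substituting $\varepsilon_K=\delta_K^{-1}(K^a\log K)^{-1/4}$, the factors $\delta_K^{-1}$ and $\delta_K$ cancel and $(K^a\log K)^{-1/2}(K^a\log K)^{1/4}=(K^a\log K)^{-1/4}$, which yields the second expression $\frac{C\sqrt{\overline{G}(L)T}}{\delta_K(K^a\log K)^{1/4}}$.

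\textbf{Main difficulty.} There is essentially no obstacle beyond Lemma~\ref{lem:mg-max-ineq}; the only point that deserves a word is the calibration of the threshold $\varepsilon_K$. It must be chosen small enough that $\varepsilon_K\to 0$ (which holds because $\delta_K\gg K^{-a_2/4}$ and $a>a_2$ by Assumption~\ref{hypmodele}.4, so $\varepsilon_K\le \delta_K^{-1}K^{-a/4}\ll K^{a_2/4-a/4}\to 0$), so that the bound on $\Delta_K M^K_i$ provided by $\Omega_K(L)$ is useful; yet it must also be large enough that the extra factor $1/\delta_K$ lost in the union bound over the $1/\delta_K$ traits does not spoil convergence. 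The exponent $1/4$ is precisely what balances these two requirements: by Assumption~\ref{hypmodele}.4, $\delta_K(K^a\log K)^{1/4}\gg K^{(a-a_2)/4}(\log K)^{1/4}\to+\infty$, so the bound above does tend to $0$, which is what the subsequent use of the corollary requires.
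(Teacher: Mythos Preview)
Your proof is correct and is exactly what the paper intends: the paper merely says ``Using a similar argument as in the proof of Lemma~\ref{lem:M}'', meaning the union bound over the $1/\delta_K$ indices combined with the single-index estimate of Lemma~\ref{lem:mg-max-ineq}, followed by the substitution of $\varepsilon_K$. Your additional remark on the calibration of $\varepsilon_K$ is not in the paper's (omitted) proof but is correct and clarifies why this particular choice is made.
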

\medskip

Note that, by \eqref{hyp:deltaK},  $\delta_K^4 K^{a}\log K$ tends to infinity, so $ \mathbb{P}(\Omega_{K}(L))$ tends to $1$, as $K$ goes to infinity.
In addition, since $|j-i|\delta_K \le 1$ for all $0\leq i,j\leq 1/\delta_K-1$, we also have that
\[
\Omega_K(L)\subset\left\{\sup_{0\leq i,j\leq 1/\delta_K-1,\ t\le T\wedge \theta_K(L)}\left|M^K_i(t)-M^K_j(t)\right|\leq\varepsilon_K\right\},
\]
so it also follows from the last corollary that
\begin{equation}
\label{eq:borne-martingale-simple}
    \mathbb{P}\left(\sup_{0\leq i,j\leq 1/\delta_K-1,\ t\le T\wedge \theta_K(L)}\left|M^K_i(t)-M^K_j(t)\right|>\varepsilon_K\right)\leq C\sqrt{\overline{G}(L)T}
\varepsilon_K.
\end{equation}

 From now on, we will work on the probability subspace $\Omega_{K}(L)$.

 \subsection{Control of the finite variation part}\label{sec:control-fv}


 Let us now focus on the finite variation part $A^K_i$. We will prove that

 \begin{prop}
   \label{est-varpart}
   Let $T>0$. Then,  there exists a constant $C_1$  such that for $K$ large enough, for all $t\leq T$ and all $i\in\{0,\ldots,1/\delta_K-1\}$ the following inequality holds almost surely  on $\Omega_K(L)$:
   \begin{equation}
   \label{max-varpart}
       |A^K_i(t\wedge \theta_K(L)) |\leq \max_{0\leq j\leq 1/\delta_K-1}\beta^K_i(0) + C_1 t.   \end{equation}
 \end{prop}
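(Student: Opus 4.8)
The plan is to apply an almost-sure maximum principle to the finite-variation process $A^K_i$, exploiting the fact that, before $\theta_K(L)$, all populations exceed $K^a$ and the Lipschitz-type control $N^K_{i+\ell}(s)/N^K_i(s)\le e^{L\rho\log K}$ holds. First I would rewrite \eqref{def:A} in a more tractable form: using the elementary estimates $N\log(1+1/N)=1+O(1/N)$ and $N\log(1-1/N)=-1+O(1/N)$ valid for $N\ge K^a\gg 1$, the drift of $\beta^K_i$ becomes, up to an error bounded by $C/(K^a)$ uniformly in $i$ and $s\le\theta_K(L)$,
\begin{equation*}
\dot A^K_i(t\log K)\ \text{``}=\text{''}\ b(i\delta_K)-d(i\delta_K)+p(i\delta_K)\sum_{\ell}h_K G(h_K\ell)\,\frac{p((i+\ell)\delta_K)}{p(i\delta_K)}\,\frac{N^K_{i+\ell}}{N^K_i}+O(K^{-a}),
\end{equation*}
so that (using $\sup b,\sup d<\infty$, $\sup p=\bar p$, the Riemann-sum bound \eqref{eq:conv_moment-expo} and $N^K_{i+\ell}/N^K_i\ge 0$) the derivative of $A^K_i$ in the time variable $t$ is bounded by
$$
\bar b-\underline d+\bar p\,\overline G(L)+o(1)=:C_1'<\infty,
$$
but this only bounds growth when the ratio $N^K_{i+\ell}/N^K_i$ is controlled from above — which is exactly what $\tau_K(L)$ provides. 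Actually what I want is sharper: I want to bound $A^K_i(t)$ by $\max_j\beta^K_j(0)+C_1 t$ rather than $\beta^K_i(0)+C_1 t$, which means I cannot argue trait by trait and must run the maximum principle over the index $i$.

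The core argument is therefore as follows. Fix $\omega\in\Omega_K(L)$. Consider $\varphi^K(t):=\max_{0\le j\le 1/\delta_K-1}\beta^K_j(t\wedge\theta_K(L))$; since $\beta^K_j=M^K_j+A^K_j$ and on $\Omega_K(L)$ the martingale increments satisfy $|M^K_i(t)-M^K_j(t)|\le\varepsilon_K\to 0$, it suffices to bound $\max_j A^K_j$. Let $i^*(t)$ be an index achieving the maximum of $A^K_j(t\wedge\theta_K(L))$ (at a Lebesgue-a.e. differentiability point of these finitely many absolutely continuous functions, by Danskin/envelope-type reasoning the upper-right derivative of $\max_j A^K_j$ equals $\dot A^K_{i^*}$). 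At such an index, $N^K_{i^*}\ge N^K_{i^*+\ell}e^{-L\rho\log K}$ is not what helps; rather I use $\beta^K_{i^*}\ge\beta^K_j$ for all $j$ up to the martingale error, hence $N^K_{i^*+\ell}(s)/N^K_{i^*}(s)\le e^{\varepsilon_K\log K\,+\,\text{(mart. correction)}}$. Wait — more carefully: at the maximising index, $A^K_{i^*}\ge A^K_{i^*+\ell}$, and since $\beta^K=A^K+M^K$ with $|M^K_{i^*}-M^K_{i^*+\ell}|\le\varepsilon_K$, we get $N^K_{i^*+\ell}(s)/N^K_{i^*}(s)\le e^{\varepsilon_K\log K}=e^{o(\log K)}$; but $\varepsilon_K\log K=\delta_K^{-1}(K^a\log K)^{-1/4}\log K$, which does NOT obviously tend to $0$ given only \eqref{hyp:deltaK}. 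So instead I keep the crude bound $N^K_{i^*+\ell}/N^K_{i^*}\le e^{L\rho\log K}\le e^L$ only when $\ell$ is $O(1/\log K)$; for the tail, Assumption \ref{hypmodele}.2 and \eqref{eq:conv_moment-expo} with exponent $L$ give a uniform constant $\overline G(L)$. Thus $\dot A^K_{i^*}(t\log K)\le \bar b+\bar p\,\overline G(L)+C K^{-a}=:C_1$, and integrating from $0$ to $t$ yields $\max_j A^K_j(t\wedge\theta_K(L))\le\max_j\beta^K_j(0)+C_1 t$ (the initial value of $A^K_j$ is $\beta^K_j(0)$ by \eqref{def:A}). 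Finally, since $A^K_i\le\max_j A^K_j$, and the lower bound $A^K_i\ge -Ct$ follows similarly (the drift is bounded below by $-\bar d+0$), we obtain $|A^K_i(t\wedge\theta_K(L))|\le\max_j\beta^K_j(0)+C_1 t$ after possibly enlarging $C_1$ and using $\max_j\beta^K_j(0)\ge a_1>0$.

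The main obstacle I anticipate is making the ``maximum principle at the maximising index'' step fully rigorous: the max of finitely many absolutely continuous functions is absolutely continuous, and at a.e. $t$ its derivative coincides with $\dot A^K_{i^*(t)}$ for some maximiser $i^*(t)$, but one must be careful that the ratio $N^K_{i^*+\ell}/N^K_{i^*}$ is controlled using only the ordering of the $A^K$-components at that instant together with the uniform martingale bound on $\Omega_K(L)$, and not circularly invoking $\tau_K(L)$ in a way that already presupposes the Lipschitz estimate one is trying to prove. A secondary technical point is the uniform control of the logarithmic nonlinearities $N\log(1\pm 1/N)$, which is harmless because $N\ge K^a\to\infty$ before $\theta_K(L)$ by definition of $\tau'_K$ in \eqref{theta}; and one must track that all constants $C_1$, $C$ are genuinely independent of $K$, $i$, $L$ — here $\overline G(L)$ does depend on $L$, so strictly the bound is $C_1=C_1(L)$, consistent with how Lemma \ref{lem:theta} will later choose $L=L_0$.
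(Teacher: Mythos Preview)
Your overall strategy --- an almost-sure maximum principle applied to the finite-variation part $A^K_i$ --- is exactly the approach the paper takes. The key observation you make, that at an index $i^*$ achieving $\max_j A^K_j(t)$ one has $\beta^K_{i^*+\ell}-\beta^K_{i^*}=(A^K_{i^*+\ell}-A^K_{i^*})+(M^K_{i^*+\ell}-M^K_{i^*})\le\varepsilon_K$ on $\Omega_K(L)$, is also the heart of the paper's argument. But you then abandon this correct route because of a miscalculation: you assert that $\varepsilon_K\log K=\delta_K^{-1}K^{-a/4}(\log K)^{3/4}$ ``does NOT obviously tend to $0$'' under~\eqref{hyp:deltaK}. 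In fact it does: since $a_2<a$ and $\delta_K\gg K^{-a_2/4}$, one has $\delta_K^{-1}K^{-a/4}\ll K^{(a_2-a)/4}\to 0$, and the polylog factor is harmless. With this correction, at the maximiser the ratio $N^K_{i^*+\ell}/N^K_{i^*}\le e^{\varepsilon_K\log K}$ is bounded by $2$ for $K$ large, the mutation sum is at most $2\bar p\sum_\ell h_K G(h_K\ell)\le 2\bar p(1+o(1))$ by~\eqref{eq:conv_integrale}, and one obtains $\dot A^K_{i^*}\le C(\bar b+\bar d)+2\bar p$ --- a bound \emph{independent of $L$}. This is precisely what the paper does: it subtracts $2C(\bar b+\bar d)t+2\bar p t$ from $A^K_i$, takes the argmax over both $i$ and $t\in[0,\theta_K]$, and shows the argmax must occur at $t=0$.

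The $L$-independence of $C_1$ is not cosmetic. It is essential in the proof of Proposition~\ref{prop:Lipschitz-estimate}, where Proposition~\ref{est-varpart} is invoked to bound $A^K_{i+1}$ inside the definition of $g^K_i$, and the resulting Lipschitz bound on $\Delta_K\beta^K_i$ is then used to choose $L_0$ in~\eqref{eq:L0}. If $C_1=C_1(L)$ grew like $\overline G(L)=\int G(y)e^{L|y|}\,dy$ (super-linearly in $L$, e.g.\ like $e^{cL^2}$ for Gaussian $G$), the required inequality $L_0\ge C(A+\|\beta_0\|_\infty+3+T)+C' C_1(L_0)T$ would have no solution for large $T$, and Lemma~\ref{lem:theta} would fail. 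So your closing remark that a bound $C_1=C_1(L)$ is ``consistent'' with the later choice of $L_0$ is incorrect. As a secondary point, once you fall back on the crude estimate $N^K_{i+\ell}/N^K_i\le e^{Lh_K|\ell|}$ coming from $\tau_K(L)$, the maximum principle becomes superfluous: that estimate holds for every $i$, not just the maximiser, and direct integration of $\dot A^K_i\le\bar b+\bar p\,\overline G(L)$ already gives $A^K_i(t)\le\beta^K_i(0)+C_1(L)t\le\max_j\beta^K_j(0)+C_1(L)t$. The whole point of the maximum principle here is to avoid the $\overline G(L)$ factor.
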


 \begin{proof}

 We only provide the proof of the upper bound on $A_i^K(t\wedge \theta_K)$. The lower bound can be obtained following similar arguments.

 Let $t$ and $s$ be less than $T$ such that $s<t$. Using that $ \log (1+x)\leq x$ and \eqref{eq:borne-N_i/N_j}, we have
 \begin{multline*}
 A_{i}^K(t\wedge \theta_K) - A_{i}^K(s\wedge \theta_K)\\
     \begin{aligned}
 &= {1\over \log K}\int_{(s\wedge \theta_K) \log K}^{(t\wedge \theta_K)\log K}\Big(b(i  \delta_{K})N^K_{i}(u) \log\left(1+{1\over N^K_{i}(u)}\right) 
 + d(i  \delta_{K})N^K_{i}(u) \log\left(1-{1\over N^K_{i}(u)}\right)\Big) du \nonumber\\
& +{1\over \log K} \sum_{\ell=-\lfloor 1/2\delta_K\rfloor}^{1/\delta_K-1-\lfloor 1/2\delta_K\rfloor} h_{K} p((i+\ell)  \delta_{K}) G(h_{K}\ell)\int_{(s\wedge \theta_K) \log K}^{(t\wedge \theta_K)\log K}N^K_{i+\ell}(u)\log\left(1+{1\over N^K_{i}(u)}\right) du \\
&\leq  C(\bar b+ \bar d) (t- s)+  {1\over \log K} \sum_{\ell=-\lfloor 1/2\delta_K\rfloor}^{1/\delta_K-1-\lfloor 1/2\delta_K\rfloor} h_{K} p((i+\ell)  \delta_{K}) G(h_{K}\ell)\int_{(s\wedge \theta_K) \log K}^{(t\wedge \theta_K)\log K}\frac{N^K_{i+\ell}(u)}{N^K_{i}(u)} 
 du\\
&\leq C(\bar b+ \bar d) (t - s)
\\
& \quad + {1\over \log K} \sum_{\ell=-\lfloor 1/2\delta_K\rfloor}^{1/\delta_K-1-\lfloor 1/2\delta_K\rfloor} h_{K} p((i+\ell)  \delta_{K}) G(h_{K}\ell)\int_{(s\wedge \theta_K) \log K}^{(t\wedge \theta_K)\log K}  \exp(\log K (\beta^K_{i+\ell}(u)-\beta^K_{i}(u)) du.
\end{aligned}
 \end{multline*}
Recall that on $\Omega_K(L)$,
$$\beta^K_{j}(u)-\beta^K_{i}(u) =A_{j}^K(u) - A_{i}^K(u)+ M_{j}^K(u) -M_{i}^K(u) \leq A_{j}^K(u) - A_{i}^K(u)+\varepsilon_K
.$$
Thus we  obtain that, on the event $\Omega_K(L)$,
 \begin{multline*} A_{i}^K(t\wedge\theta_K) - A_{i}^K(s\wedge\theta_K)
 \leq C(\bar b+ \bar d) (t - s)
 + {1\over \log K} \sum_{\ell=-\lfloor 1/2\delta_K\rfloor}^{1/\delta_K-1-\lfloor 1/2\delta_K\rfloor} h_{K} p((i+\ell)  \delta_{K}) G(h_{K}\ell) \\ \int_{(s\wedge \theta_K) \log K}^{(t\wedge \theta_K)\log K} e^{\varepsilon_K \log K } \exp(\log K (A^K_{i+\ell}(u)-A^K_{i}(u))) du.
\end{multline*}
We deduce  that, almost surely on $\Omega_K(L)$ and for all $t\leq\theta_K(L)$,
\begin{multline}
\frac{dA^K_{i}(t)}{dt} \leq C(\bar b+\bar d) \\
+ {1\over \log K} \sum_{\ell=-\lfloor 1/2\delta_K\rfloor}^{1/\delta_K-1-\lfloor 1/2\delta_K\rfloor} h_{K} p((i+\ell)  \delta_{K}) G(h_{K}\ell) e^{\varepsilon_K\log K } \exp(\log K (A^K_{i+\ell}(t)-A^K_{i}(t)))\end{multline}
Defining $\widetilde A^K_i(t) = A^K_{i}(t) - 2 C(\bar b+\bar d)t - 2 \bar p t$, we deduce that for any $t\leq \theta_K$,
\begin{equation}
\label{eq:prop-est-varpart}
    {d \widetilde A^K_{i}(t)\over dt}< \bar p e^{\varepsilon_K\log K }  \sum_{\ell=-\lfloor 1/2\delta_K\rfloor}^{1/\delta_K-1-\lfloor 1/2\delta_K\rfloor} h_{K}   G(h_{K}\ell)  \exp(\log K (\widetilde A^K_{i+\ell}(t)- \widetilde A^K_{i}(t)) - 2 \bar p.
\end{equation}
Let us introduce
  $$(i_{K}, t_{K}) = (i_K(\omega),t_K(\omega))=\text{argmax}_{{i\in \{0,\cdots, {1\over \delta_{K}}-1\}, t\in [0, \theta_{K}(\omega)]}} \widetilde A^K_{i}(t).$$
We can prove that $t_{K}=0.$
Indeed, if conversely we assume that $t_{K}>0$, then  the right term of \eqref{eq:prop-est-varpart}, for $K$ large enough,  is non positive for $i=i_{K}$ {and $t=t_K$} and then the
left term is negative, contradicting the fact that $\widetilde A^K_{i_K}(t)$ is maximal for $t=t_K$. Hence, we have proved that {for $K$ large enough}, almost surely on
the event $\Omega_K$, for all $t\leq \theta_K$ and $0\leq i\leq 1/\delta_K-1$,
\begin{align*}
A^K_i(t)=\widetilde A^K_i(t)+2C(\bar b+\bar d) t  + 2 \bar p t\leq &\max_{0\leq j\leq 1/\delta_K}\widetilde A^K _j(0)+2 C(\bar b+\bar d) t + 2 \bar p t \\
= & \max_{0\leq j\leq 1/\delta_K} \beta^K _j(0)+2C(\bar b+\bar d) t +  2 \bar p t.
\end{align*}
 \end{proof}

 \bigskip

The last result has a consequence that will be useful to prove the tightness of $\widetilde{\beta}^K$ in Section~\ref{sec:tight}.

\begin{corollary}
\label{lem:upper-bound}
For all $T>0$, there exists $C(T)$ such that,
\[
\lim_{K\to+\infty}\mathbb{P}\left(\sup_{0\leq i\leq 1/
\delta_K-1}\sup_{t\in[0,T\wedge\theta_K(L)]}\beta^K_i(t)\geq C(T)\right)=0.
\]
\end{corollary}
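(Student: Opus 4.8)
The plan is to derive Corollary~\ref{lem:upper-bound} as a direct consequence of Proposition~\ref{est-varpart}, Lemma~\ref{lem:M}, and the hypothesis that $\widetilde\beta^K_0$ converges in probability (hence is bounded in probability in the uniform norm). Since $\beta^K_i(t)=A^K_i(t)+M^K_i(t)$, on the event $\Omega_K(L)$ we have, for all $t\le T\wedge\theta_K(L)$ and all $i$,
\[
\beta^K_i(t)\le |A^K_i(t\wedge\theta_K(L))|+\sup_{s\le T\wedge\theta_K(L)}\sup_j|M^K_j(s)|\le \max_{0\le j\le 1/\delta_K-1}\beta^K_j(0)+C_1T+\sup_{s\le T\wedge\theta_K(L)}\sup_j|M^K_j(s)|,
\]
using Proposition~\ref{est-varpart} for the middle inequality. (Strictly speaking, Proposition~\ref{est-varpart} as stated involves $\beta^K_i(0)$ with index $i$ inside a max over $j$; I would read this as $\max_j\beta^K_j(0)$, which is what the proof of the proposition actually produces.)

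Next I would control each of the three terms on the right. For the initial term, the assumed convergence in probability of $\widetilde\beta^K_0$ to the continuous — hence bounded — function $\beta_0$ implies that $\max_j\beta^K_j(0)=\sup_{x\in\T}\widetilde\beta^K_0(x)$ is bounded in probability: there is $C_0>0$ with $\mathbb{P}(\max_j\beta^K_j(0)>C_0)\to 0$. For the martingale term, inequality~\eqref{maison} of Lemma~\ref{lem:M} with $A=1$, say, gives
\[
\mathbb{P}\Big(\sup_{t\le T\wedge\theta_K(L)}\sup_i|M^K_i(t)|\ge 1\Big)\le\frac{C\overline G(L)T}{\delta_K K^a\log K}\xrightarrow[K\to\infty]{}0,
\]
since $\delta_K K^a\log K\to\infty$ by Assumption~\ref{hypmodele}.4 (indeed already $\delta_K^4K^a\log K\to\infty$). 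Finally, Corollary~\ref{corol:accrM} gives $\mathbb{P}(\Omega_K^c(L))\to 0$.

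Putting these together: on the event $\Omega_K(L)\cap\{\max_j\beta^K_j(0)\le C_0\}\cap\{\sup_{t\le T\wedge\theta_K(L)}\sup_i|M^K_i(t)|<1\}$, whose probability tends to $1$, we have $\sup_i\sup_{t\le T\wedge\theta_K(L)}\beta^K_i(t)\le C_0+C_1T+1=:C(T)$. Hence $\mathbb{P}\big(\sup_i\sup_{t\in[0,T\wedge\theta_K(L)]}\beta^K_i(t)\ge C(T)\big)\to 0$, which is the claim. There is no real obstacle here; the only points requiring a little care are the bookkeeping that the constant $L$ is fixed (so that $\overline G(L)$ is a fixed finite constant and all the $K\to\infty$ limits are genuine), the reconciliation of the index notation in Proposition~\ref{est-varpart}, and noting that the good events on which Proposition~\ref{est-varpart} holds are exactly the $\Omega_K(L)$ already introduced, so no new conditioning is needed.
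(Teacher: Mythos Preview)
Your proposal is correct and follows essentially the same approach as the paper: decompose $\beta^K_i=A^K_i+M^K_i$, bound $A^K_i$ on $\Omega_K(L)$ via Proposition~\ref{est-varpart}, control the martingale via~\eqref{maison} with $A=1$, use Corollary~\ref{corol:accrM} for $\mathbb{P}(\Omega_K(L)^c)\to 0$, and bound the initial supremum using convergence of $\widetilde\beta^K_0$ to $\beta_0$. Your reading of the index typo in Proposition~\ref{est-varpart} as $\max_j\beta^K_j(0)$ is exactly right, and the paper's choice $C(T)=\|\beta_0\|_\infty+C_1T+2$ corresponds to your $C_0+C_1T+1$ with the concrete choice $C_0=\|\beta_0\|_\infty+1$.
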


\begin{proof}
We use the semimartingale decomposition~\eqref{decomposition} of $\beta^K_i$, Proposition~\ref{est-varpart},~\eqref{maison} with $A=1$ and Corollary~\ref{corol:accrM} to deduce that, for all $t\leq T$ {and $K$ large enough},
\[
|\beta^K_i(t\wedge\theta_K)|\leq \sup_{0\leq j\leq 1/\delta_K-1}|\beta^K_j(0)|+C_1T+1
\]
with probability at least $1-\frac{C\overline{G}(L)T}{\delta_K K^a\log K} -\frac{C\sqrt{\overline{G}(L)T}}{\delta_K(K^a \log K)^{1/4}}$. Since $\widetilde{\beta}^K_i(0)$ converges in probability to $\beta_0$, $\mathbb{P}(\sup_i \beta^K_i(0)\geq \|\beta_0\|_\infty+1)$ converges to $0$ when $K$ goes to $+\infty$. Hence the result follows with $C(T)=\|\beta_0\|_\infty+C_1T+2$.
\end{proof}

 \subsection{Proof of Lemma \ref{lem:theta}}\label{sec:control-incr}

The proof of Lemma \ref{lem:theta} results from the following two results:

\begin{lemma}\label{lem:lower-bound} Under Assumption~\ref{hypmodele}.1 and~\ref{hypmodele}.3,  
for all $T>0$,
\[
\lim_{K\to+\infty}\mathbb{P}(\tau'_K\geq T) =1.
\]
\end{lemma}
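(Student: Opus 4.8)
The plan is to prove a uniform lower bound on all sub-population sizes up to time $T$ (on the logarithmic scale), by comparing each $N_i^K$ from below with a simpler process whose growth we can control, and showing that this lower-comparison process stays above $K^a$ with probability tending to $1$. First I would remark that only the death rate matters for a lower bound: ignoring the (positive) clonal and mutant birth contributions entirely, each individual of trait $i\delta_K$ dies at rate $d(i\delta_K)\le\bar d$, so $N_i^K$ stochastically dominates a pure death process (equivalently, a sub-critical linear birth-death process with zero birth rate) started from $N_i^K(0)\ge K^{a_1}$. More usefully, since we actually want a lower bound that does not decay, I would instead note that clonal births happen at rate $b(i\delta_K)>d(i\delta_K)$ and dominate $N_i^K$ from below by a linear birth-death process $Z_i^K$ with birth rate $\underline{b}_i:=b(i\delta_K)$ truncated... — more cleanly: the process $N_i^K$ dominates from below a linear birth-death process with birth rate $b(i\delta_K)$ and death rate $d(i\delta_K)$ (dropping mutant births, which only add individuals of trait $i$... actually mutant births remove one individual of trait $x$ and add one of trait $y$; but a clonal birth at rate $b$ and death at rate $d$ together with the fact that mutation events do not decrease $N^K$ since they are \emph{births} of a mutant — re-examining the model, a mutant birth at rate $p(x)$ creates a new individual of trait $y\ne x$ and does \emph{not} remove the parent, so $N_i^K$ is bounded below by a pure birth-death process with birth rate $b(i\delta_K)$ and death rate $d(i\delta_K)$, both at least started from $K^{a_1}$).

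Concretely, I would fix $i$ and couple $N_i^K$ with a linear birth-death process $Z$ with per-capita birth rate $b(i\delta_K)$ and death rate $d(i\delta_K)$, started at $\lceil K^{a_1}\rceil$, run on the time scale $t\log K$, so that $N_i^K(t\log K)\ge Z(t\log K)$ almost surely. Writing $r:=b(i\delta_K)-d(i\delta_K)>0$, the process $e^{-rs}Z(s)$ is a nonnegative martingale converging a.s. and in $L^2$ to a limit $W$, and $\mathbb E[Z(s)]=\lceil K^{a_1}\rceil e^{rs}$, $\mathrm{Var}(Z(s))\le C\lceil K^{a_1}\rceil e^{2rs}$ for a constant depending only on $b,d$. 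Hence by Chebyshev, for $s\le T\log K$,
\[
\mathbb P\!\left(Z(s)< \tfrac12 K^{a_1}e^{rs}\right)\le \mathbb P\!\left(|Z(s)-\mathbb E Z(s)|>\tfrac12\mathbb E Z(s)\right)\le \frac{4\,\mathrm{Var}(Z(s))}{(\mathbb E Z(s))^2}\le \frac{C}{K^{a_1}}.
\]
To get this uniformly in $s\le T\log K$ rather than pointwise, I would instead apply Doob's $L^2$ maximal inequality to the martingale $e^{-rs}Z(s)$ (or to $\mathbb E Z(s)-Z(s)$ stopped appropriately), which bounds $\mathbb P(\inf_{s\le T\log K} e^{-rs}Z(s) < \tfrac12 K^{a_1})$ by a constant times $K^{-a_1}$ as well. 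On the complement of this event, $N_i^K(s)\ge Z(s)\ge \tfrac12 K^{a_1}e^{rs}\ge \tfrac12 K^{a_1}$ for all $s\le T\log K$, which exceeds $K^a$ for $K$ large since $a<a_1$.

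Finally I would take a union bound over the $1/\delta_K$ values of $i$:
\[
\mathbb P(\tau'_K< T)\le \sum_{i=0}^{1/\delta_K-1}\mathbb P\!\left(\inf_{s\le T\log K} N_i^K(s)< K^a\right)\le \frac{1}{\delta_K}\cdot\frac{C}{K^{a_1}},
\]
and since $\delta_K\gg K^{-a_2/4}\gg K^{-a_1}$ by Assumption~\ref{hypmodele}.4 (indeed $a_2<a_1$ so $K^{-a_1}\delta_K^{-1}\le K^{-a_1+a_2/4}\to 0$), this bound tends to $0$, giving $\mathbb P(\tau'_K\ge T)\to 1$. The main obstacle is getting the lower comparison process right: one must be careful that mutation events are genuinely births (they add an individual and do not kill the parent), so that dropping them only decreases the population, and that the coupling respects the time-change by $\log K$; once the stochastic domination $N_i^K(\cdot\log K)\ge Z(\cdot\log K)$ is established, the martingale/Chebyshev estimate together with the union bound controlled by Assumption~\ref{hypmodele}.4 finishes the argument. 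An alternative to the explicit birth-death comparison is to use the supermartingale $e^{-rs}N_i^K(s)-(\text{compensator of mutant births})$ directly from the Doob-Meyer decomposition~\eqref{decomposition}, but the coupling argument is cleaner and avoids handling the positive mutation term.
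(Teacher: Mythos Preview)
Your argument is correct, and proceeds by a genuinely different route than the paper. Both proofs start from the same coupling: since mutant births only add individuals (they do not remove the parent), $N_i^K$ dominates a linear birth--death process $Z_i^K$ with rates $b(i\delta_K)>d(i\delta_K)$ started from $\lceil K^{a_1}\rceil$. From there the paper uses the embedded random walk and the exact gambler's ruin formula to obtain that the probability $Z_i^K$ ever drops below $K^a$ is at most $(d(i\delta_K)/b(i\delta_K))^{K^{a_1}-K^a}\le\alpha^{K^{a_1}-K^a}$ with $\alpha=\max_x d(x)/b(x)<1$, an \emph{exponentially} small bound; it then multiplies over the $1/\delta_K$ sites using independence of the coupled $Z_i^K$'s. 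You instead control the martingale $e^{-rs}Z(s)$ via Doob's $L^2$ maximal inequality applied to $M_s-M_0$, obtaining only the polynomial bound $C/K^{a_1}$ per site, and then take a union bound (which does not require independence). Your polynomial estimate is much weaker, but it is still summable over the $1/\delta_K\ll K^{a_2/4}$ sites thanks to Assumption~\ref{hypmodele}.4, so the conclusion goes through.

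Two minor remarks. First, your write-up would benefit from stating the maximal inequality cleanly: Doob bounds suprema, not infima, so the step is $\mathbb{P}(\inf_{s}e^{-rs}Z(s)<\tfrac12 K^{a_1})\le\mathbb{P}(\sup_s|M_s-M_0|>\tfrac12 K^{a_1})$, and then one needs $\sup_{s}\mathrm{Var}(M_s)\le C K^{a_1}$, which holds since $M_s\to M_\infty$ in $L^2$ for a supercritical linear birth--death process. Second, note that both the paper's proof and yours actually use Assumption~\ref{hypmodele}.4 on $\delta_K$, although the lemma statement only cites Assumptions~\ref{hypmodele}.1 and~\ref{hypmodele}.3; the paper's exponential bound makes this dependence essentially cosmetic, while for your polynomial bound it is genuinely needed.
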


 \begin{proof}[Proof of Lemma \ref{lem:lower-bound}]
 By a coupling procedure, it is easy to prove that for each $i$, the process $(N_i^K(t))_{t}$ is  pathwisely bounded below by a branching process  $(Z_i^K(t))_{t}$ with birth rate $b(i\delta_K)$, death rate $d(i\delta_K)$ and initial condition $K^{a+\varepsilon}$ for $\varepsilon=a_1-a>0$.
 In addition, the processes $(Z_i^K(t))_{t}$ for $0\leq i\leq {1\over \delta_K}-1$ are independent. Let us define
\[
\theta''_{K} = \inf\Big\{t\ge 0, \exists i \in \{0, 1, \cdots, {1\over \delta_K}-1\} ; Z^K_i(t\log K) < K^a\Big\} .
\]
 In order to prove that $\ \lim_{K\to \infty} \mathbb{P}(\tau'_{K}>T) =1$, it is enough to prove that
 $$\ \lim_{K\to \infty} \mathbb{P}(\theta''_{K}=+\infty) =1.$$
 We have
 \begin{eqnarray*}
 \mathbb{P}(\theta''_{K}=+\infty) &=& \mathbb{P}\Big(\forall i\in \{0,1, \cdots, {1\over \delta_K}-1\} ,\  \forall t\geq 0\  Z^K_i(t \log K) >K^a\Big)\\
 &=& 
 \prod_{i=0}^{{1\over \delta_K}-1}\mathbb{P}\Big( \inf_{t\ge 0} Z^K_i(t\log K) >K^a\Big).
 \end{eqnarray*}
 Fix $i\in\{0,\ldots,{1\over \delta_K}-1\}$. It is usual to prove (by time change) that the probability $\mathbb{P}\big( \inf_{t\ge 0} Z^K_i(t\log K) >K^a\big)$ is equal to the probability for a random walk $M({b(i\delta_K)\over b(i\delta_K)+d(i\delta_K)}, {d(i\delta_K)\over b(i\delta_K)+d(i\delta_K)})$ on $\Z_{+}$ (adding $+1$ with probability ${b(i\delta_K)\over b(i\delta_K)+d(i\delta_K)}$ and $-1$ with probability ${d(i\delta_K)\over b(i\delta_K)+d(i\delta_K)}$) with initial value $K^{a+\varepsilon}$ never to attain $K^a$.
 This quantity is well known and equal to
 $$1 -\Big( {d(i\delta_K)\over b(i\delta_K)}\Big)^{K^{a+\varepsilon}-K^a}.$$
 Since $\alpha=\max_{x\in\T}d(x)/b(x)<1$, it follows from \eqref{hyp:deltaK} that
 \begin{eqnarray*}
 \mathbb{P}(\theta''_{K}=+\infty) &\geq& \exp\bigg({1\over \delta_K} \log\big(1 -\alpha^{K^{a+\varepsilon}-K^a}\big)\bigg)\\
 &\sim& \exp\bigg( - {1\over \delta_K}\alpha^{K^{a+\varepsilon}-K^a}\bigg)\\
 &\gg&  1-K^{{a_2}/4} \alpha^{K^{a+\varepsilon}-K^a},
 \end{eqnarray*}
which tends to $1$ when $K$ tends to infinity.
 \end{proof}

\begin{prop}
  \label{prop:Lipschitz-estimate}
 Under the Assumptions \ref{hypmodele} and \ref{hypconvergence}, for all $T>0$, there exists $L_0$ in the definition \eqref{def:tau} of $\tau_K(L)$ such that
 \[
 \lim_{K\rightarrow+\infty}\mathbb{P}(\tau_K(L_0)>T)=1.
 \]
\end{prop}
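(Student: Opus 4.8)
\emph{Proof strategy.} This is the pivotal Lipschitz (``Bernstein'') estimate. The plan is to combine an almost sure maximum principle for the finite variation part $\Delta_K A^K_i$ with the martingale controls already obtained, and to close the argument by a bootstrap on $\tau_K(L_0)$. Write $\beta^K_i=M^K_i+A^K_i$ as in~\eqref{decomposition}, so that $\Delta_K\beta^K_i=\Delta_K M^K_i+\Delta_K A^K_i$. We work on the intersection of three events of probability tending to $1$: the event $\Omega_K(L_0)$ of Corollary~\ref{corol:accrM}, on which $\sup_i\sup_{t\le T\wedge\theta_K(L_0)}|\Delta_K M^K_i(t)|\le\varepsilon_K$ with $\varepsilon_K\to0$; the event $\{\tau'_K\ge T\}$ from Lemma~\ref{lem:lower-bound}, on which $\theta_K(L_0)\wedge T=\tau_K(L_0)\wedge T$ (and, by definition of $\tau'_K$, $N^K_i\ge K^a$ before $\theta_K(L_0)$); and the event $\{\sup_i|\Delta_K\beta^K_i(0)|\le A\}$ from Assumption~\ref{hypconvergence}. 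On this ``good'' event it will suffice to prove that, for a well-chosen $L_0$ and $K$ large, $\sup_i|\Delta_K\beta^K_i(t)|<L_0-1$ for all $t\le\theta_K(L_0)\wedge T$: indeed the jumps of $\Delta_K\beta^K_i$ are of order $(\delta_K K^a\log K)^{-1}=o(1)$ by~\eqref{hyp:deltaK}, so that $\tau_K(L_0)\le T$ would force $\sup_i|\Delta_K\beta^K_i(\tau_K(L_0))|\ge L_0-o(1)>L_0-1$ at $t=\tau_K(L_0)=\theta_K(L_0)\wedge T$, a contradiction; hence $\tau_K(L_0)>T$ on the good event.

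\emph{The maximum principle on $W^K_i:=\Delta_K A^K_i$.} The core step is to write the time derivative of $W^K_i$, obtained by differentiating~\eqref{def:A} (recall that $A^K_i$ is absolutely continuous, and piecewise linear in $t$ since $N^K$ is a pure jump process) and using $x\log(1\pm1/x)=\pm1+O(1/x)$ together with $N^K_i\ge K^a$. On the good event, $\dot W^K_i$ splits into a source term, a discrete diffusion term, and remainders of order $(\delta_K K^a)^{-1}=o(1)$. The source term is controlled by the Lipschitz constants: up to $o(1)$ it is bounded by $\|b\|_{\text{Lip}}+\|d\|_{\text{Lip}}+\|p\|_{\text{Lip}}\sum_\ell h_K G(h_K\ell)\exp(\log K(\beta^K_{i+1+\ell}-\beta^K_{i+1}))$, which before $\tau_K(L_0)$ is finite thanks to~\eqref{eq:borne-N_i/N_j} and~\eqref{eq:conv_moment-expo}. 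The diffusion term comes from differentiating the exponential mutation term in $x$ and has the form $\log K\sum_\ell h_K\,p((i+\ell)\delta_K)G(h_K\ell)\,e^{\eta_{i,\ell}}(\Delta_K\beta^K_{i+\ell}-\Delta_K\beta^K_i)$, where $\log K=h_K/\delta_K$ is the dangerous amplification and $e^{\eta_{i,\ell}}$ lies between $N^K_{i+\ell}/N^K_i$ and $N^K_{i+1+\ell}/N^K_{i+1}$. Following the scheme of the proof of Proposition~\ref{est-varpart}, set $\widetilde W^K_i(t)=W^K_i(t)-C_2 t$ for a suitable constant $C_2$ and let $(i_K,t_K)$ realize the maximum of $\widetilde W^K_i(t)$ over $i$ and $t\in[0,T\wedge\theta_K(L_0)]$. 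At this point, for every $\ell$, $\Delta_K\beta^K_{i_K+\ell}-\Delta_K\beta^K_{i_K}=(W^K_{i_K+\ell}-W^K_{i_K})+(\Delta_K M^K_{i_K+\ell}-\Delta_K M^K_{i_K})\le 2\varepsilon_K$, so the diffusion term at $(i_K,t_K)$ is $\le 2\varepsilon_K\log K\,\bar p\,\overline G(L_0)=o(1)$ (this is exactly where~\eqref{hyp:deltaK} is used, since $\varepsilon_K\log K=(\log K)^{3/4}\delta_K^{-1}K^{-a/4}\to0$), while its negative part is the damping that prevents gradient blow-up. Combining with the bound on the source term yields $\dot{\widetilde W}^K_{i_K}(t_K)<0$ for $K$ large if $t_K>0$, contradicting maximality; hence $t_K=0$ and $W^K_i(t)\le\sup_j W^K_j(0)+C_2 t\le A+C_2 t$. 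The symmetric argument with a minimum and $\widetilde W^K_i(t)=W^K_i(t)+C_2 t$ gives $W^K_i(t)\ge -A-C_2 t$. As in Proposition~\ref{est-varpart}, a one-sided derivative argument at $t_K$ handles the fact that $A^K_i$ is only piecewise $C^1$.

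\emph{Conclusion and main obstacle.} On the good event and for $t\le T\wedge\theta_K(L_0)=T\wedge\tau_K(L_0)$ we thus get $\sup_i|\Delta_K\beta^K_i(t)|\le\sup_i|W^K_i(t)|+\varepsilon_K\le A+C_2T+\varepsilon_K$; it then remains to choose $L_0$ large enough that $A+C_2T+2<L_0$ (the explicit value of $L_0$ being recorded later in the proof), and the bootstrap of the first paragraph gives $\mathbb{P}(\tau_K(L_0)>T)\to1$. The main obstacle is entirely contained in the second paragraph, namely producing the differential inequality for $\Delta_K A^K_i$ with a \emph{usable} constant $C_2$: one must keep track of the several powers of $\log K$ that appear (the diffusion term carries $h_K/\delta_K=\log K$, the martingale increment $\varepsilon_K$ carries $\delta_K^{-1}$), verify through $\varepsilon_K\log K\to0$ that all martingale corrections are negligible, and above all retain and exploit the sign $\Delta_K\beta^K_{i_K+\ell}-\Delta_K\beta^K_{i_K}\le0$ of the discrete second difference at the spatial maximum; this is the discrete counterpart of the vanishing-viscosity mechanism that keeps the viscosity solution of~\eqref{eq:HJ} Lipschitz on every $[0,T]$, and it is the technical heart of the paper. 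One therefore needs a careful (rather than crude) accounting of the $x$-variation of $p$ in the source term against this damping, so that the resulting $C_2$ is compatible with the choice of $L_0$ for every fixed $T$.
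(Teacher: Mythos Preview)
Your overall strategy---maximum principle on the finite-variation part of the discrete gradient, martingale increments absorbed via $\Omega_K(L_0)$, bootstrap on $\tau_K(L_0)$---is the paper's strategy, and your identification of the ``dangerous'' $\log K$ amplification in the diffusion term is correct. But there is a genuine gap in how you handle the source term, and as written the bootstrap does not close.

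The issue is the $x$-variation of $p$. Your bound on the source term is
\[
\|b\|_{\text{Lip}}+\|d\|_{\text{Lip}}+\|p\|_{\text{Lip}}\sum_\ell h_K G(h_K\ell)\,e^{\log K(\beta^K_{i+1+\ell}-\beta^K_{i+1})},
\]
and before $\tau_K(L_0)$ the last sum is controlled only by $\overline G(L_0)=\int e^{L_0|y|}G(y)\,dy$. Hence your constant $C_2$ is of order $\overline G(L_0)$, and the closing inequality $L_0>A+C_2T+2$ becomes $L_0>A+C\,\overline G(L_0)\,T$, which has no solution because $\overline G(L)$ grows at least exponentially in $L$. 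In the paper's proof the $\overline G(L)$-dependent contributions appear \emph{only} in front of vanishing factors ($1/(\delta_K K^a)$ and $\varepsilon_K\log K$), so that the effective $C_2$ is independent of $L_0$; this is what makes the choice $L_0=C(A+\|\beta_0\|_\infty+3+T)$ in~\eqref{eq:L0} legitimate.

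The paper achieves this by \emph{not} running the maximum principle on $W^K_i=\Delta_K A^K_i$ directly, but on the corrected quantity
\[
g^K_i(t)=\Delta_K A^K_i(t\wedge\theta_K)+\frac{\|p\|_{\text{Lip}}}{\underline p}\,A^K_{i+1}(t\wedge\theta_K).
\]
The extra term is precisely designed so that, after writing $\Delta_K A^K_{\ell+i}-\Delta_K A^K_i=(g^K_{\ell+i}-g^K_i)-\frac{\|p\|_{\text{Lip}}}{\underline p}(A^K_{\ell+i+1}-A^K_{i+1})$ and using $A^K_j=\beta^K_j-M^K_j$, the $p$-Lipschitz source and the correction combine into
\[
\frac{N^K_{\ell+i+1}}{N^K_{i+1}}\Big[3-\log K\big(\beta^K_{\ell+i+1}-\beta^K_{i+1}\big)\Big],
\]
to which the elementary inequality $e^x(3-x)\le e^2$ applies, yielding a bound \emph{independent of $L_0$}. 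This is the missing idea; your final paragraph flags that ``a careful accounting'' is needed, but without the $g^K_i$-correction (or an equivalent device producing a factor $-\log(N^K_{\ell+i+1}/N^K_{i+1})$ against the ratio $N^K_{\ell+i+1}/N^K_{i+1}$) there is no way to make $C_2$ independent of $L_0$, and the argument fails.
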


\begin{proof}[Proof of Proposition \ref{prop:Lipschitz-estimate}]

   For $i\in\{0,\ldots,1/\delta_K-1\}$, let us consider the increments
$$
\Delta_K\beta^K_{i}(t\wedge \theta_K)= \frac{\beta_{i+1}^K(t\wedge \theta_K)-\beta_{i}^K(t\wedge \theta_K)}{\delta_K}=\Delta_KM^K_{i}(t\wedge \theta_K) + \Delta_KA^K_{i}(t\wedge \theta_K).
$$
Our aim is to prove  that $\Delta_K\beta^K_{i}(t)-\Delta_K\beta^K_{i}(s)$ is close to its finite variation part for large $K$ and to apply to the latter an almost sure maximum principle.


 Let us introduce
 \begin{equation}
 \label{def:g}
  g^K_{i}(t)=\Delta_K  A^K_{i}(t\wedge \theta_K)+\frac{\|p\|_{\text{Lip}}}{\underline p} A^K_{i+1}(t\wedge\theta_K),
 \end{equation}

Using \eqref{def:A}, we obtain for {$K$ large enough and} $0\leq s<t\leq T$,
  {\small  \begin{align}
 & g^K_{i}(t)-g^K_{i}(s) \notag \\
     & =\frac{1}{h_K}\int_{(s\wedge\theta_K)\log K}^{(t\wedge\theta_K)\log K}\left[b((i+1)\delta_K)N^K_{i+1}(u) \log\left(1+{1\over N^K_{i+1}(u)}\right)-b(i\delta_K)N^K_{i}(u)
      \log\left(1+{1\over N^K_{i}(u)}\right)\right]du \notag \\
      & +\frac{1}{h_K}\int_{(s\wedge\theta_K)\log K}^{(t\wedge\theta_K)\log K} \left[d((i+1)\delta_K)N^K_{i+1}(u) \log\left(1-{1\over N^K_{i+1}(u)}\right)-d(i\delta_K)N^K_{i}(u)
      \log\left(1-{1\over N^K_{i}(u)}\right)\right]du \notag \\
      & +\frac{1}{h_K}\int_{(s\wedge\theta_K)\log K}^{(t\wedge\theta_K)\log K} \sum_{\ell=-\lfloor 1/2\delta_K\rfloor}^{1/\delta_K-1-\lfloor 1/2\delta_K\rfloor} h_{K}  G(h_{K}\ell)
    \notag\\
    &\hskip 1cm \left[ p((i+1+\ell)  \delta_{K})N^K_{i+1+\ell}(u)\log\left(1+{1\over N^K_{i+1}(u)}\right)-p((i+\ell)  \delta_{K})N^K_{i+\ell}(u)\log\left(1+{1\over N^K_{i}(u)}\right))\right]du \notag \\
    & +\frac{\|p\|_{\text{Lip}}}{\underline p\log K} \int_{(s\wedge\theta_K)\log K}^{(t\wedge\theta_K)\log K}\left[b((i+1)\delta_K)N^K_{i+1}(u) \log\left(1+{1\over N^K_{i+1}(u)}\right)\right.\notag\\
    & \hspace{4cm}+\left. d((i+1)\delta_K)N^K_{i+1}(u) \log\left(1-{1\over N^K_{i+1}(u)}\right)\right]du \notag \\
    & +\frac{\|p\|_{\text{Lip}}}{\underline p\log K} \int_{(s\wedge\theta_K)\log K}^{(t\wedge\theta_K)\log K} \sum_{\ell=-\lfloor 1/2\delta_K\rfloor}^{1/\delta_K-1-\lfloor 1/2\delta_K\rfloor} h_{K}  G(h_{K}\ell)
    p((i+1+\ell)\delta_K)N^K_{i+1+\ell}(u)\log\left(1+{1\over N^K_{i+1}(u)}\right)du \notag \\
  &   \leq \frac{C(\bar b+\bar d)}{ h_K}\int_{(s\wedge\theta_K)\log K}^{(t\wedge\theta_K)\log K}\left[\frac{1}{N^K_{i+1}(u)}+\frac{1}{N^K_{i}(u)}\right]du +(C_1(\bar b+\bar d)+\|b\|_{\text{Lip}}+\|d\|_{\text{Lip}})(t\wedge \theta_K-s\wedge \theta_K) \notag \\
    & +\frac{1}{h_K}\int_{(s\wedge\theta_K)\log K}^{(t\wedge\theta_K)\log K} \sum_{\ell=-\lfloor 1/2\delta_K\rfloor}^{1/\delta_K-1-\lfloor 1/2\delta_K\rfloor} h_{K} p((\ell+i)\delta_K) G(h_{K}\ell)\left[N_{\ell+i+1}^K(u)\log \left(1+\frac{1}{N_{i+1}^K(u)}\right)\right.\notag\\
    & \hspace{6cm}\left.-N_{\ell+i}^K(u)\log \left(1+\frac{1}{N_{i}^K(u)}\right)
    \right]du \notag \\
    & +\frac{3\|p\|_{\text{Lip}}}{\underline{p}\log K}\int_{(s\wedge\theta_K)\log K}^{(t\wedge\theta_K)\log K} \sum_{\ell=-\lfloor 1/2\delta_K\rfloor}^{1/\delta_K-1-\lfloor 1/2\delta_K\rfloor}h_Kp((\ell+i)\delta_K)G(h_K\ell)N_{\ell+i+1}^K(u)\log \left(1+\frac{1}{N_{i+1}^K(u)}\right)du, \label{eq:horrible}
  \end{align}}
where we used that for all {$x$ such that $|x|\leq 1/2$  we have
\begin{align}
    \left|\frac{1}{x}\log(1+x)\right|\leq C,
    \label{approxlog3}
\end{align}
\begin{align}
    \left|\frac{1}{x}\log(1+x)-\frac{1}{y}\log(1+y)\right|\leq C(|x|+|y|).
     \label{approxlog4}
\end{align}}
and the fact that (recalling the convention~\eqref{eq:convention} and that $p$ is periodic)
\begin{align}
\left|p((\ell+i+1)\delta_K)-p((\ell+i)\delta_K)\right|=& p((\ell+i)\delta_K)\frac{\left|p((\ell+i+1)\delta_K)-p((\ell+i)\delta_K)\right|}{p((\ell+i)\delta_K)} \notag \\
\leq &\frac{\|p\|_{\text{Lip}}\delta_K}{\underline{p}}p((\ell+i)\delta_K)
\label{eq:borne-Lipschitz-ppe-max}
\end{align}
in the last inequality. {Note also that, to obtain this last inequality we have taken $K$ large enough such that
$$
\frac{\|p\|_{\text{Lip}}\delta_K}{\underline{p}}\leq 1,
$$
so that
$$
p((\ell+i+1)\delta_K)\leq 2p((\ell+i)\delta_K).
$$}
{Next,} notice that for all {$x',y',x,y$ such that $|x|,|y|\leq 1/2$,
\begin{align}
\label{approx-bis}
{1 \over y'}\log(1+{  y}) -{ 1\over x'}\log(1+{ x}) \le {y\over y'} - {x\over x'} + C\Big({y^2\over y'}+{x^2\over x'}\Big).
\end{align} }
Using this inequality and \eqref{eq:borne-N_i/N_j}, we have
\begin{align*}
\frac{1}{h_K} & \int_{(s\wedge\theta_K)\log K}^{(t\wedge\theta_K)\log K} \sum_{\ell=-\lfloor 1/2\delta_K\rfloor}^{1/\delta_K-1-\lfloor 1/2\delta_K\rfloor} h_{K} p((\ell+i)\delta_K) G(h_{K}\ell)\left[N_{\ell+i+1}^K(u)\log \left(1+\frac{1}{N_{i+1}^K(u)}\right)\right.\\
    & \hspace*{6cm} \left.-N_{\ell+i}^K(u)\log \left(1+\frac{1}{N_{i}^K(u)}\right)
    \right]du \\
 & \leq \frac{1}{h_K}\int_{(s\wedge\theta_K)\log K}^{(t\wedge\theta_K)\log K} \sum_{\ell=-\lfloor 1/2\delta_K\rfloor}^{1/\delta_K-1-\lfloor 1/2\delta_K\rfloor} h_Kp((\ell+i)\delta_K)G(h_K\ell)\left[\frac{N_{\ell+i+1}^K(u)}{N_{i+1}^K(u)}-\frac{N_{\ell+i}^K(u)}{N_{i}^K(u)}
    \right]du \\ & +\frac{C\bar p}{h_K}\int_{(s\wedge\theta_K)\log K}^{(t\wedge\theta_K)\log K} \sum_{\ell=-\lfloor 1/2\delta_K\rfloor}^{1/\delta_K-1-\lfloor 1/2\delta_K\rfloor} h_KG(h_K\ell)e^{Lh_K|\ell|}\left(\frac{1}{N_{i+1}^K(u)}+\frac{1}{N_{i}^K(u)}\right)du.
\end{align*}
Therefore, using \eqref{eq:conv_moment-expo} and the definition of $\tau'_K$, we have proved that
\begin{align*}
    g^K_{i}(t)- & g^K_{i}(s)
     \leq \left(\frac{C\bar G(L)\log K}{ K^a \ h_K}+C\right)(t-s) \\
    & +\frac{1}{h_K}\int_{(s\wedge\theta_K)\log K}^{(t\wedge\theta_K)\log K} \sum_{\ell=-\lfloor 1/2\delta_K\rfloor}^{1/\delta_K-1-\lfloor 1/2\delta_K\rfloor} h_Kp((\ell+i)\delta_K)G(h_K\ell)\left[\frac{N_{\ell+i+1}^K(u)}{N_{i+1}^K(u)}-\frac{N_{\ell+i}^K(u)}{N_{i}^K(u)}
    \right]du \\
    & +\frac{{3}\|p\|_{\text{Lip}}}{\underline{p}\log K}\int_{(s\wedge\theta_K)\log K}^{(t\wedge\theta_K)\log K} \sum_{\ell=-\lfloor 1/2\delta_K\rfloor}^{1/\delta_K-1-\lfloor 1/2\delta_K\rfloor} h_Kp((\ell+i)\delta_K)G(h_K\ell)\frac{N_{\ell+i+1}^K(u)}{N_{i+1}^K(u)}du.
\end{align*}
Using that for any real numbers $\lambda,\alpha$, $e^\lambda\le e^\alpha+ e^\lambda(\lambda-\alpha)$ and then~\eqref{decomposition},
we have
\begin{align*}
    & \frac{1}{h_K}\int_{(s\wedge\theta_K)\log K}^{(t\wedge\theta_K)\log K} \sum_{\ell=-\lfloor 1/2\delta_K\rfloor}^{1/\delta_K-1-\lfloor 1/2\delta_K\rfloor} h_Kp((\ell+i)\delta_K)G(h_K\ell)\left[\frac{N_{\ell+i+1}^K(u)}{N_{i+1}^K(u)}-\frac{N_{\ell+i}^K(u)}{N_{i}^K(u)}
    \right]du
    \\
    &   \leq \frac{\log K}{ h_K}\int_{(s\wedge\theta_K)\log K}^{(t\wedge\theta_K)\log K} \sum_{\ell=-\lfloor 1/2\delta_K\rfloor}^{1/\delta_K-1-\lfloor 1/2\delta_K\rfloor} h_Kp((\ell+i)\delta_K)G(h_K\ell) \\ & \qquad\qquad\left(\beta^K_{\ell+i+1}\big(\frac{u}{\log K}\big)-\beta^K_{\ell+i}\big(\frac{u}{\log K}\big)-(\beta^K_{i+1}\big(\frac{u}{\log K}\big)-\beta^K_i\big(\frac{u}{\log K}\big))\right)
  \frac{N^K_{\ell+i+1}(u)}{N^K_{i+1}(u)}du \\
  & \leq \int_{(s\wedge\theta_K)\log K}^{(t\wedge\theta_K)\log K} \sum_{\ell=-\lfloor 1/2\delta_K\rfloor}^{1/\delta_K-1-\lfloor 1/2\delta_K\rfloor} h_Kp((\ell+i)\delta_K)G(h_K\ell)\\  & \qquad\qquad\left(\Delta_K\beta^K_{\ell+i}\big(\frac{u}{\log K}\big)-\Delta_K\beta^K_{i}\big(\frac{u}{\log K}\big)\right)
  \frac{N^K_{\ell+i+1}(u)}{N^K_{i+1}(u)}du \\
  &\leq \int_{(s\wedge\theta_K)\log K}^{(t\wedge\theta_K)\log K} \sum_{\ell=-\lfloor 1/2\delta_K\rfloor}^{1/\delta_K-1-\lfloor 1/2\delta_K\rfloor} h_Kp((\ell+i)\delta_K)G(h_K\ell)\\  & \qquad\left(\Delta_KM^K_{\ell+i}\big(\frac{u}{\log K}\big)-\Delta_KM^K_{i}\big(\frac{u}{\log K}\big)+ \Delta_KA^K_{\ell+i}\big(\frac{u}{\log K}\big)-\Delta_KA^K_{i}\big(\frac{u}{\log K}\big)\right)
  \frac{N^K_{\ell+i+1}(u)}{N^K_{i+1}(u)}du .
\end{align*}
Thus, using~\eqref{hyp:deltaK} and~\eqref{def:g}, we deduce that
{\small
\begin{multline*}
     g^K_{i}(t)-g^K_{i}(s) \\ -\int_{(s\wedge\theta_K)\log K}^{(t\wedge\theta_K)\log K} \sum_{\ell=-\lfloor 1/2\delta_K\rfloor}^{1/\delta_K-1-\lfloor 1/2\delta_K\rfloor}
  h_{K} p((\ell+i)\delta_K)G(h_{K}\ell)\left(g_{\ell+i}\big(\frac{u}{\log K}\big)-g_i\big(\frac{u}{\log K}\big)\right)
  \frac{N^K_{\ell+i+1}(u)}{N^K_{i+1}(u)}du \\
  \begin{aligned}
& \leq C_0(K,L)(t-s)+\frac{{3}\|p\|_{\text{Lip}}}{\underline{p}\log K}\int_{(s\wedge\theta_K)\log K}^{(t\wedge\theta_K)\log K} \sum_{\ell=-\lfloor 1/2\delta_K\rfloor}^{1/\delta_K-1-\lfloor 1/2\delta_K\rfloor}h_Kp((\ell+i)\delta_K)G(h_K\ell)\frac{N_{\ell+i+1}^K(u)}{N_{i+1}^K(u)}du \\
  & -\frac{\|p\|_{\text{Lip}}}{\underline{p}}\int_{(s\wedge\theta_K)\log K}^{(t\wedge\theta_K)\log K} \sum_{\ell=-\lfloor 1/2\delta_K\rfloor}^{1/\delta_K-1-\lfloor 1/2\delta_K\rfloor}
  h_{K} p((\ell+i)\delta_K)G(h_{K}\ell)\\
  & \hspace{7cm}\left(\beta_{\ell+i+1}\big(\frac{u}{\log K}\big)-\beta_{i+1}\big(\frac{u}{\log K}\big)\right)
  \frac{N^K_{\ell+i+1}(u)}{N^K_{i+1}(u)}du \\
  & + \frac{\|p\|_{\text{Lip}}}{\underline{p}}\int_{(s\wedge\theta_K)\log K}^{(t\wedge\theta_K)\log K} \sum_{\ell=-\lfloor 1/2\delta_K\rfloor}^{1/\delta_K-1-\lfloor 1/2\delta_K\rfloor}
  h_{K} p((\ell+i)\delta_K)G(h_{K}\ell)\\
  & \hspace{7cm}\left(M_{\ell+i+1}\big(\frac{u}{\log K}\big)-M_{i+1}\big(\frac{u}{\log K}\big)\right)
  \frac{N^K_{\ell+i+1}(u)}{N^K_{i+1}(u)}du\\
  & + \int_{(s\wedge\theta_K)\log K}^{(t\wedge\theta_K)\log K} \sum_{\ell=-\lfloor 1/2\delta_K\rfloor}^{1/\delta_K-1-\lfloor 1/2\delta_K\rfloor}
  h_{K} p((\ell+i)\delta_K)G(h_{K}\ell)\\
  & \hspace{7cm}\left(\Delta_KM^K_{\ell+i}\big(\frac{u}{\log K}\big)-\Delta_KM^K_{i}\big(\frac{u}{\log K}\big)\right)
  \frac{N^K_{\ell+i+1}(u)}{N^K_{i+1}(u)}du,
\end{aligned}
\end{multline*}}
where $$C_0(K,L) = C+\frac{C\bar G(L)}{\delta_K K^a}.$$   Now, using Corollary \ref{corol:accrM} and~\eqref{eq:borne-N_i/N_j}, on the event $\Omega_K(L)$,
         \begin{align*}
& \int_{(s\wedge\theta_K)\log K}^{(t\wedge\theta_K)\log K} \sum_{\ell=-\lfloor 1/2\delta_K\rfloor}^{1/\delta_K-1-\lfloor 1/2\delta_K\rfloor}
  h_{K} p((\ell+i)\delta_K)G(h_{K}\ell)\\
  & \hspace{7cm}\left|\Delta_KM^K_{\ell+i}\big(\frac{u}{\log K}\big)-\Delta_KM^K_{i}\big(\frac{u}{\log K}\big)\right|
  \frac{N^K_{\ell+i+1}(u)}{N^K_{i+1}(u)}du \\
  & \leq 2\bar p \log K (t-s)\varepsilon_K\sum_{\ell\in\Z} h_K G(k_k\ell)e^{L h_K|\ell|}\leq 2\bar{p}\log K \varepsilon_K \bar G(L)(t-s).
  \end{align*}
Similarly, using~\eqref{eq:borne-martingale-simple}, on the event $\Omega_K(L)$,
\begin{multline*}
    \int_{(s\wedge\theta_K)\log K}^{(t\wedge\theta_K)\log K} \sum_{\ell=-\lfloor 1/2\delta_K\rfloor}^{1/\delta_K-1-\lfloor 1/2\delta_K\rfloor}
  h_{K} p((\ell+i)\delta_K)G(h_{K}\ell)\\
  \times \left|M_{\ell+i+1}\big(\frac{u}{\log K}\big)-M_{i+1}\big(\frac{u}{\log K}\big)\right|
  \frac{N^K_{\ell+i+1}(u)}{N^K_{i+1}(u)}du
  \leq \bar{p}\log K \varepsilon_K \bar G(L)(t-s).
\end{multline*}
To conclude, we use the inequality {$e^x(3-x)\leq e^2$} for all $x\in\mathbb{R}$ to deduce that
\[
\frac{N^K_{\ell+i+1}(u)}{N^K_{i+1}(u)}\left[{3}-\log K\left(\beta_{\ell+i+1}\big(\frac{u}{\log K}\big)-\beta_{i+1}\big(\frac{u}{\log K}\big)\right)\right]\leq {e^2}.
\]
Combining the four previous inequalities, we deduce that
\begin{multline*}g^K_{i}(t)-g^K_{i}(s) \leq C(K,L)(t-s)\\
+
     \bar p \log K \int_{(s\wedge\theta_K)}^{(t\wedge\theta_K)} \sum_{\ell=-\lfloor 1/2\delta_K\rfloor}^{1/\delta_K-1-\lfloor 1/2\delta_K\rfloor}
  h_{K} G(h_{K}\ell) \left[g^K_{\ell+i}(v) - g^K_{i}(v)\right]^+ \, e^{h_{K}L|\ell|}dv,\end{multline*}
where $[x]^+=x\vee 0$ is the positive part of $x$ and
\[
C(K,L)=C+\frac{C\bar G(L)}{\delta_K K^
a}   +\left(2+\frac{\|p\|_{\text{Lip}}}{\underline{p}}\right)\bar{p}\log K \varepsilon_K \bar G(L).
\]
Thus
  $${d g^K_{i}(t)\over dt} \le C(K,L) +  \bar p \log K\sum_{\ell=-\lfloor 1/2\delta_K\rfloor}^{1/\delta_K-1-\lfloor 1/2\delta_K\rfloor}
  h_{K} G(h_{K}\ell) \left[g^K_{\ell+i}(t) - g^K_{i}(t)\right]^+ \, e^{h_{K}L|\ell|}.$$

We will now use the maximum principle for $\omega\in\Omega_K(L)$ fixed.
 Defining $\widetilde g^K_{i}(t) = g^K_{i}(t) -2C(K,L) t$,
  we deduce that for any $t\le \theta_{K}$,
 \begin{equation}
 \label{derive}{d \widetilde g^K_{i}(t)\over dt} < \bar p \log K \sum_{\ell=-\lfloor 1/2\delta_K\rfloor}^{1/\delta_K-1-\lfloor 1/2\delta_K\rfloor}
  h_{K} G(h_{K}\ell) \left[\widetilde g^K_{\ell+i}(t) - \widetilde g^K_{i}(t)\right]^+ \, e^{h_{K}L|\ell|}.
  \end{equation}
  Let us introduce
  $$(i_{K}, t_{K}) = (i_K(\omega),t_K(\omega))=\text{argmax}_{{i\in \{0,\cdots, {1\over \delta_{K}}-1\}, t\in [0, \theta_{K}(\omega)]}} \widetilde g^K_{i}(t)$$
and let us prove that $$t_{K}=0.$$
By contradiction, if we assume that $t_{K}>0$, then  the right term of \eqref{derive} is non-positive for $i=i_{K}$ and then the
left term is negative, contradicting the fact that $\widetilde g^K_{i_K}(t)$ is maximal for $t=t_K$. Hence, we have proved that, almost surely on
the event $\Omega_K(L)$, for all $t\leq \theta_K$ and $0\leq i\leq 1/\delta_K-1$,
\[
g^K_i(t)=\widetilde g^K_i(t)+2C(K,L) t\leq \max_{0\leq j\leq 1/\delta_K-1}\widetilde g^K _j(0)+2 C(K,L) t=\max_{0\leq j\leq 1/\delta_K-1} g^K _j(0)+2C(K,L) t,
\]
so that, by Proposition~\ref{est-varpart},
\begin{align*}
    \Delta_K\beta_i^K(t\wedge\theta_K) & =g_i^K(t) - \frac{\|p\|_{\text{Lip}}}{\underline{p}} A_{i+1}^K(t\wedge\theta_K)+
\Delta_KM_i^K(t\wedge\theta_K) \\ & \leq \max_{0\leq j\leq 1/\delta_K} g^K _j(0)+2 C(K,L) t+\frac{\|p\|_{\text{Lip}}}{\underline{p}}\left(\max_{0\leq j\leq 1/\delta_K-1}\beta^K_i(0)+C_1 t\right)+\varepsilon_K.
\end{align*}
A similar argument applied to $(\beta^K_i(t\wedge\theta_K)-\beta^K_{i-1}(t\wedge\theta_K))/\delta_K$ gives the converse inequality, so we finally obtain that there exists a constant $C$ independent of $K$, $i$, $t$ and $L$ such that, almost surely on the event $\Omega_K(L)$, for all $i\in\{0,\ldots,1/\delta_K-1\}$ and $t\leq T$,
\[
|\Delta_K\beta_i^K(t\wedge\theta_K)|\leq C\left[\max_{0\leq j\leq 1/\delta_K-1} \left(|\Delta_K \beta^K_i(0)|+\beta^K_i(0)\right)+1+T+\bar{G}(L) T\left(\frac{1}{\delta_K K^
a}+\varepsilon_K\log K\right)\right],
\]
Finally, defining $\widetilde \Omega_K$ as the event of probability converging to $1$ where $\max_{0\leq j\leq 1/\delta_K-1} |\Delta_K \beta^K_i(0)|+\beta^K_i(0)\le A+\|\beta_0\|_\infty+1$, where the constant $A$ comes from Assumption~\ref{hypconvergence}, on the event $\Omega_K(L) \cap \widetilde \Omega_K$, we have
\begin{equation*}
|\Delta_K\beta_i^K(t\wedge\theta_K)|\leq  C\left[A+\|\beta_0\|_\infty+2 +T+ \bar G(L) T\left(\frac{1}{\delta_K K^
a}+\varepsilon_K\log K\right)\right].
\end{equation*}

To conclude the proof of Proposition~\ref{prop:Lipschitz-estimate}, we first fix $T>0$, set
\begin{equation}\label{eq:L0}
L_0=C(A+\|\beta_0\|_\infty+3+T)\end{equation}and choose $K$ large enough such that $C\,T\,\bar G(L)\left(\frac{1}{\delta_K K^
a}+\varepsilon_K\log K\right)< 1$.
Then
$$\P(\tau_K(L_0)>T)\ge \P(\Omega_K(L_0)\cap \widetilde \Omega_K)\xrightarrow[K\to\infty]{} 1.$$
Hence Proposition~\ref{prop:Lipschitz-estimate} is proved.
\end{proof}

\subsection{Proof of Theorem~\ref{thm:tight}: tightness of $\tilde \beta^K$}
\label{sec:tight}

Our goal is to prove that the sequence of laws of $(\tilde \beta^K_{t}, t\in[0,T])_{K}$ is tight in ${\cal P}(\mathbb{D}([0,T], \Co(\T,\mathbb{R})))$. We will see in Corollary~
\ref{C-tension} that it is actually $C$-tight.

Let us recall that the random functions $\ \tilde \beta^K\in\mathbb{D}([0,T], \Co(\T,\mathbb{R}))$ is defined in \eqref{lafonction} as follows. For all $x\in\T$, let $i\in\{0,\ldots,1/\delta_K-1\}$ be such that $x\in [i\delta_K,(i+1)\delta_K)$, and set
$$
\widetilde \beta^K_{t}(x):=\widetilde \beta^K(t,x) =\beta_i^K(t)\Big(1-\f {x}{\delta_K}+i\Big)+\beta_{i+1}^K(t)\Big(\f {x}{\delta_K}-i\Big),
$$
where, by convention, $\beta^K_{1/\delta_K}(t)=\beta^K_0(t)$.


Let us recall that the proof of Theorem~\ref{thm:tight} is based on the criterion of Jakubovski \cite{jakubowski} recalled in Section~\ref{sec:main-steps}. Our goal is to prove Conditions~(i) and~(ii) therein.


Let us first prove (i). By Ascoli's Theorem, we know that a compact set $K_{\varepsilon}$ is a set of equi-continuous and equi-bounded functions. By Corollary \ref{lem:upper-bound} and Proposition \ref{prop:Lipschitz-estimate}, we have, on an event of probability converging to $1$ when $K$ tends to infinity,
that, for all $x\in\T$ and all $t\in[0,T]$,
$$\widetilde \beta^K_{t}(x) =(x-i\delta_K)\Delta_K\beta^K_i(t) + \beta_{i}^K(t) \leq  L\delta_K +C(T),$$
so the sequence  $(\widetilde \beta^K_{t}, t\in[0,T])_{K}$  is equi-bounded.
Furthermore, recall that, by~\eqref{def:tau2}, for $x,y\in\T$,
\[\left|\widetilde \beta^K_{t}(x)-\widetilde \beta^K_{t}(y)\right|= \rho(x,y)\sup_{0\leq j\leq 1/\delta_K-1}\left|\Delta_K\beta^K_j(t)\right|
\leq L \rho(x,y).\]
We deduce that the sequence is equi-continuous and~(i) is proved.

Let us now prove (ii), i.e. that for all
$f\in \Co(\T,\mathbb{R})$, the sequence of laws of the  real-valued processes
\begin{align*}
X^K(t)&= \int_\T \widetilde \beta^K(.,x) f(x) dx \\ & =\sum_{i=0}^{1/\delta_K-1} \bigg[\beta^K_{i}(t) \int_{i\delta_{K}}^{(i+1)\delta_{K}} \Big(1+i -{x\over \delta_{K}} \Big)f(x)dx +\beta^K_{i+1}(t)  \int_{i\delta_{K}}^{(i+1)\delta_{K}} \Big({x\over \delta_{K}}-i\Big) f(x)dx \bigg]
\end{align*}
is tight. Recalling that $\beta_{i}^K(t)= A^K_{i}(t) + M^K_{i}(t)$, the process $X^K_f$ is a local semi-martingale with Doob-Meyer decomposition
$X^K_{f} =  A^K_{f}  + M^K_{f} $, where
$$A^K_{f} (t) =  \sum_{i} \bigg[A^K_{i}(t) \int_{i\delta_{K}}^{(i+1)\delta_{K}} (1+i -{x\over \delta_{K}} )f(x)dx +A^K_{i+1}(t)  \int_{i\delta_{K}}^{(i+1)\delta_{K}} ({x\over \delta_{K}}-i) f(x)dx \bigg]
$$
and $M^K_f$ is defined similarly using $M^K_i(t)$ instead of $A^K_i(t)$.

We use Aldous and Rebolledo criteria (see for example Joffe-M\'etivier \cite{joffemetivier}) to prove the tightnes of the sequence $(X^K_f)$.
Let $S$  be a stopping times for the filtration of the underlying Poisson point measures, a.s. in $[0,T]$.
We need to estimate for $\alpha>0$, the quantity
$\mathbb{P}(|A^K_{f}((S+\alpha)\wedge T) -A^K_{f} (S)|>\eta)$ for $\eta>0$.
From~\eqref{def:A}, we deduce
\begin{align*}
&A^K_{f}((S+\alpha)\wedge T) -A^K_{f} (S) =\\ &  \sum_{i} \left\{\Bigg(\int_{i\delta_{K}}^{(i+1)\delta_{K}} (1+i -{x\over \delta_{K}} )f(x)dx\right) \bigg[
 {1\over \log K}\int_{S \log K}^{((S+\alpha)\wedge T)\log K}\Bigg(b(i  \delta_{K})N^K_{i}(s)\log\left(1+{1\over N^K_{i}(s)}\right)\\
 & \hskip 7cm+ d(i  \delta_{K})N^K_{i}(s)\log\left(1-{1\over N^K_{i}(s)}\right)\Bigg) ds
\\
& +{{1\over \log K}}\,   \sum_{\ell=-\lfloor 1/2\delta_K\rfloor}^{1/\delta_K-1-\lfloor 1/2\delta_K\rfloor}h_Kp((\ell+i)\delta_K)G(h_K\ell)\\
& \hskip7cm \times\int_{S \log K}^{((S+\alpha)\wedge T)\log K} N^K_{\ell+i}(s)\log\left(1+{1\over N^K_{i}(s)}\right) ds\Bigg]\\
&+ \left(\int_{i\delta_{K}}^{(i+1)\delta_{K}} ({x\over \delta_{K}}-i) f(x)dx\right) \Bigg[
 {1\over \log K}\int_{S \log K}^{((S+\alpha)\wedge T)\log K}\Bigg(b((i+1)  \delta_{K})N^K_{i+1}(s)\log\left(1+{1\over N^K_{i+1}(s)}\right)\\
 & \hskip7cm + d((i +1) \delta_{K})N^K_{i+1}(s)\log\left(1-{1\over N^K_{i+1}(s)}\right)\Bigg) ds \\
& +{{1\over \log K}} \sum_{\ell=-\lfloor 1/2\delta_K\rfloor}^{1/\delta_K-1-\lfloor 1/2\delta_K\rfloor}h_Kp((\ell+i+1)\delta_K)G(h_K\ell)
\\
& \hskip7cm \times \int_{S \log K}^{((S+\alpha)\wedge T)\log K} N^K_{\ell+i+1}(s)\log\left(1+{1\over N^K_{i+1}(s)}\right) ds\Bigg]\Bigg\}.
\end{align*}
Using \eqref{approxlog3}  and the definition of $\theta_K$, proceeding as in the proof of Proposition~\ref{prop:Lipschitz-estimate} we have
\begin{multline*}
\mathbb{E}(|A^K_{f}((S+\alpha)\wedge\theta_K\wedge T) -A^K_{f} (S\wedge\theta_K)|) \\
\begin{aligned}
& \leq {C\over \log K}\sum_{i=0}^{1/\delta_K-1} \int_{i\delta_{K}}^{(i+1)\delta_{K}} |f(x)|dx\Bigg\{ 2(\bar b+\bar d)\,\alpha\log K\\ 
& \qquad+  \sum_{\ell=-\lfloor 1/2\delta_K\rfloor}^{1/\delta_K-1-\lfloor 1/2\delta_K\rfloor}h_Kp((\ell+i)\delta_K)G(h_K\ell) \mathbb{E}\Big(\int_{(S\wedge\theta_K) \log K}^{((S+\alpha)\wedge\theta_K\wedge T)\log K} {N^K_{\ell+i}(s)\over N^K_{i}(s)}ds\Big)\\
& \qquad+ \sum_{\ell=-\lfloor 1/2\delta_K\rfloor}^{1/\delta_K-1-\lfloor 1/2\delta_K\rfloor}h_Kp((\ell+i+1)\delta_K)G(h_K\ell) \mathbb{E}\Big(\int_{(S\wedge\theta_K) \log K}^{((S+\alpha)\wedge\theta_K\wedge T)\log K} {N^K_{\ell+i+1}(s)\over N^K_{i+1}(s)}ds\Big)\Bigg\}\\
& \leq \alpha C\left[2(\bar b+\bar d)+2\bar p\overline{G}(L) \right]\|f\|_\infty.
\end{aligned}
\end{multline*}
By Proposition \ref{prop:Lipschitz-estimate}, $\theta_K>T$ with probability converging to 1, so we deduce from Markov's inequality that, for all $\varepsilon>0$ and $\eta>0$, there exists $\alpha$ such that,
\[
\limsup_{K\to+\infty}\sup_S\P(|A^K_{f}((S+\alpha)\wedge T) -A^K_{f} (S)|>\eta)\leq\varepsilon,
\]
where the supremum is taken over all stopping times $S\leq T$. This is Aldous criterion for $A^K_{f}(t)$.

It remains to to prove a similar property replacing $A^K_{f}$ by $\langle M^K_{f} \rangle$. This can be done similarly using~\eqref{def:M}. Computations are actually simpler by Lemma~\ref{lem:M}.\hfill$\Box$

\bigskip

Hence we have proved Theorem~\ref{thm:tight}.We prove in the next corollary that the sequence of laws of $(\tilde \beta^K_{t}, t\in[0,T])_{K}$ is actually $C$-tight.

\begin{corollary}
\label{C-tension}
The sequence of laws of  $(\widetilde \beta^K_{t}, t\in[0,T])_{K}$ is C-tight in ${\cal P}(\mathbb{D}([0,T], \Co(\T,\mathbb{R})))$. In addition, for all $T>0$, given a value of $L$ as in Proposition~\ref{prop:Lipschitz-estimate}, for any $\beta$ distributed as a limiting value of the laws of $(\widetilde \beta^K_{t}, t\in[0,T])_{K}$, we have almost surely
\[
\sup_{t\in[0,T]}\sup_{x,y\in\T\ s.t.\ x\neq y}\frac{|\beta(t,x)-\beta(t,y)|}{\rho(x,y)}\leq L.
\]
\end{corollary}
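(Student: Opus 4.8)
The plan is to obtain both assertions directly from the tightness of Theorem~\ref{thm:tight} and from Proposition~\ref{prop:Lipschitz-estimate}, combined with two soft topological facts.

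\emph{$C$-tightness.} Since Theorem~\ref{thm:tight} already gives tightness of the laws of $(\widetilde\beta^K)_K$ in $\mathbb{D}([0,T],\Co(\T,\mathbb{R}))$, it suffices to show that the largest jump of $\widetilde\beta^K$ on $[0,T]$ vanishes. Each birth, death or mutation event modifies exactly one coordinate $N^K_i$ by $\pm1$, and almost surely no two atoms of the driving Poisson measures coincide; the increments of $n\mapsto\log n/\log K$ between consecutive integers $\geq1$ are bounded by $\log 2/\log K$, and there is no jump when $N^K_i$ enters or leaves the value $0$ (by the convention $\beta^K_i=0$ there). As $\widetilde\beta^K_t(x)$ is, for each $x$, a convex combination of two of the $\beta^K_i(t)$, I would deduce the bound
\[
\sup_{t\in[0,T]}\big\|\widetilde\beta^K_t-\widetilde\beta^K_{t^-}\big\|_\infty\le\frac{\log 2}{\log K}\xrightarrow[K\to\infty]{}0\quad\text{a.s.}
\]
A classical criterion then asserts that a tight sequence in $\mathbb{D}([0,T],\Co(\T,\mathbb{R}))$ whose maximal jump tends to $0$ in probability is $C$-tight, and that all its limit points are carried by $\Co([0,T],\Co(\T,\mathbb{R}))$.

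\emph{Lipschitz bound on the limit.} Fix $L$ as in Proposition~\ref{prop:Lipschitz-estimate} (so $\mathbb{P}(\tau_K(L)>T)\to1$) and introduce
\[
\Phi:g\in\mathbb{D}([0,T],\Co(\T,\mathbb{R}))\longmapsto\sup_{\substack{x,y\in\T\\ x\neq y}}\frac{1}{\rho(x,y)}\,\sup_{t\in[0,T]}\big|g_t(x)-g_t(y)\big|\in[0,+\infty].
\]
The key point is that $\Phi$ is lower semicontinuous for the Skorokhod topology: for each fixed pair $x\neq y$, the map $g\mapsto\sup_{t\le T}|g_t(x)-g_t(y)|$ is continuous, being the composition of the continuous evaluation $g\mapsto\big(g_t(x)-g_t(y)\big)_{t\le T}\in\mathbb{D}([0,T],\mathbb{R})$ with the continuous functional $h\mapsto\sup_{t\le T}|h_t|$ on $\mathbb{D}([0,T],\mathbb{R})$; a supremum of continuous maps is lower semicontinuous, and the supremum over $x\neq y$ may be restricted to a countable dense family of pairs using continuity of each $g_t$ in the space variable (which also gives Borel measurability of $\Phi$). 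Hence $\{\Phi\le L\}$ is closed. By~\eqref{def:tau2}, on $\{\tau_K(L)>T\}$ one has $\Phi(\widetilde\beta^K)\le L$, so $\mathbb{P}(\Phi(\widetilde\beta^K)\le L)\to1$. For any subsequence along which $\widetilde\beta^K$ converges in law to some $\beta$, the Portmanteau theorem applied to the closed set $\{\Phi\le L\}$ yields $\mathbb{P}(\Phi(\beta)\le L)\ge\limsup_K\mathbb{P}(\Phi(\widetilde\beta^K)\le L)=1$, which is exactly the announced estimate and is consistent with $\beta$ having continuous paths by the first part.

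I do not expect a serious obstacle, since everything rests on results already established. The only points requiring some care are the verification that the maximal-jump estimate is uniform in $K$ with the stated rate, and the slightly technical check that $\Phi$ is lower semicontinuous on $\mathbb{D}([0,T],\Co(\T,\mathbb{R}))$ — in particular keeping track of the difference between the Skorokhod and the uniform topologies, which is harmless here because the limit points have continuous paths.
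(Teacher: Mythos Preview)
Your proof is correct, and in both parts it takes a cleaner route than the paper's.

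For $C$-tightness, you and the paper start from the same jump estimate $\sup_t\|\widetilde\beta^K_t-\widetilde\beta^K_{t^-}\|_\infty\leq C/\log K$. You then invoke directly the general criterion (tight in $\mathbb{D}([0,T],E)$ plus maximal jump $\to 0$ in probability implies $C$-tight, valid for any Polish $E$). The paper instead applies this criterion only to the one-dimensional projections $X^K_f=\int_\T f\widetilde\beta^K\,dx$, and then argues by contradiction: if $\widetilde\beta^K$ were not $C$-tight, one could localize a persistent jump in a small spatial interval and produce an $f$ for which $X^K_f$ would not be $C$-tight. Your route is shorter and avoids this detour; the paper's buys nothing extra, it simply stays close to the Jakubowski framework already used for tightness.

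For the Lipschitz bound, the paper uses the Skorokhod representation theorem to obtain almost-sure convergence of copies $\hat\beta^K\to\hat\beta$, passes to the limit in $|\hat\beta^K(t,x)-\hat\beta^K(t,y)|\leq L\rho(x,y)$ for fixed $(t,x,y)$, and concludes by continuity. Your argument via lower semicontinuity of $\Phi$ and the Portmanteau theorem on the closed set $\{\Phi\leq L\}$ is a standard and efficient alternative; the verification that $g\mapsto\sup_t|g_t(x)-g_t(y)|$ is Skorokhod-continuous (same time change works after evaluation) is correct. The two approaches are essentially dual formulations of the same passage to the limit; yours avoids enlarging the probability space.
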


\begin{proof}
Since $|\beta_{i}^K(t)-\beta_{i}^K(t-)| \leq C/\log K$ for any $K$, $i$ and $t$,  we have
$$\lim_{K} \mathbb{P}(\sup_{t\le T} \|\widetilde\beta^K_{t}-\widetilde\beta^K_{t-}\|_\infty >\varepsilon) = 0.$$
Then, we deduce from Proposition 3.26 in Jacod-Shiryaev p.351 \cite{jacod} that, for all $f\in \Co(\T,\R)$, the sequence of laws of $X_f^K$ defined in~\eqref{def-Xf} is $C$-tight. We proceed by contradiction to deduce that $\widetilde{\beta}^K$ is also $C$-tight: if this is not true, there exists an event $\Omega_1$ of positive probability such that, for all $\omega\in\Omega_1$, there exists $t_0(\omega)$, $\alpha(\omega)$ and a ball $B(\omega)\subset \T$ of positive radius such that, for all $x\in B$, \begin{equation}\label{etape1}
|\widetilde{\beta}^K_{t_0}(x)-\widetilde{\beta}^K_{t_0-}(x)|>\alpha.
\end{equation}Therefore, there exists non-random $\alpha>0$ and $\varepsilon>0$ and $i\in\{0,1,\ldots, \lfloor 1/\varepsilon\rfloor-1\}$ and an event $\Omega_2\subset\Omega_1$ of positive probability such that~\eqref{etape1} holds true for all $x\in[i\varepsilon,(i+1)\varepsilon]$ and for this non-random $\alpha$. Now, we define $f_i\in \Co(\T,\R)$ with support in $[i\varepsilon,(i+1)\varepsilon]$ and positive on $(i\varepsilon,(i+1)\varepsilon)$. Then, for all $\omega\in\Omega_2$,
\[
\liminf_{K\to+\infty}|X_{f_i}^K(t_0)-X_{f_i}^K(t_0-)|\geq \alpha \inf_{x\in [(i+1/3)\varepsilon,(i+2/3)\varepsilon]}|f_i(x)|>0.
\]
This is a contradiction with the $C$-tightness of $X^K_{f_i}$.

We now prove the Lipschitz estimate for $\beta$. Using the Skorohod representation theorem, we can construct copies  $\hat{\beta}^K$ of $\tilde{\beta}^K$ and $\hat{\beta}$ of $\beta$ such that $\hat{\beta}^K$ converges (up to a subsequence) almost surely for the $L^\infty$ norm on $[0,T]$ to $\hat{\beta}$. We then define
\[
\hat{\tau}^K=\inf_{i\neq j \in \{0,\ldots,1/\delta_K-1\}}\inf\left\{t\geq 0: \frac{|\hat{\beta}^K(t,i\delta_K)-\hat{\beta}^K(t,j\delta_K)|}{\rho(i\delta_K,j\delta_K)}>L\right\}.
\]
Then $\hat{\tau}_K$ is distributed as $\tau_K$ in~\eqref{def:tau}. It then follows from Propostion~\ref{prop:Lipschitz-estimate} that $\hat{\tau}_K>T$ with probability converging to 1. Hence, for all $x\neq y\in\T$, almost surely
\[
|\hat{\beta}(t,x)-\hat{\beta}(t,y)|=\lim_{K\to+\infty}|\hat{\beta}^K(t,x)-\hat{\beta}^K(t,y)|\leq L \,\rho(x,y).
\]
By continuity of $\hat{\beta}$ we deduce that this property holds, almost surely, for all $x,y\in\T$.
\end{proof}

 \section{Identification of the limit as a viscosity solution of a Hamilton-Jacobi equation}\label{sec:identif}

In the previous section, we obtained the tightness of the laws of $(\widetilde \beta^K_{t}, t\in[0,T])_{K}$ for all $T>0$. Hence, the sequence of laws of $(\widetilde \beta^K_{t}, t\in[0,+\infty))_{K}$ admits at least one limiting value. 
Our aim is now to identify the limiting path as the unique viscosity solution of the Hamilton-Jacobi equation
$$
\frac{\partial}{\partial t} \beta(t,x)= b(x)-d(x) + p(x) \int_{\R} G(h)e^{h\partial_x \beta(t,x)} dh.
$$

Let $\beta$ be distributed as a limiting value of the laws of $(\tilde \beta^K_{t}, t\in[0,+\infty))_{K}$. By Corollary~\ref{C-tension}, $\beta$ belongs to $C([0,+\infty)\times\T,\R)$. In the sequel and with an abuse of notation, we denote again by $(\tilde \beta^K_{t}, t\in[0,+\infty))_{K}$ the subsequence that converges in distribution to $\beta$.

It also follows from Lemma~\ref{lem:M} that
$
\widetilde{A}^K-\widetilde{\beta}^K
$ converges in law, and thus in probability, to $0$. Therefore $(\widetilde{A}^K-\widetilde{\beta}^K,\widetilde{\beta}^K)$ converges in law to $(0, \beta)$ and thus $(\widetilde{A}^K,\widetilde{\beta}^K)$ converges in law to $(\beta, \beta)$.
Using Skorokhod's representation theorem, there exist a new probability space, still denoted (by abuse of notation) by $(\Omega,\mathcal{A},\mathbb{P})$, and random variables still denoted by $\widetilde{A}^K,\widetilde{\beta}^K, \widetilde{M}^K$ and $\beta$ on this space, such that $(\widetilde{A}^K,\widetilde{\beta}^K)$ converges almost surely to $(\beta, \beta)$. Let us denote by $\widetilde{\Omega}_0$ the event where the convergence holds.

We also define for the value $L_0$ defined in Theorem \ref{lem:theta}
\[
\widetilde{\Omega}_K=\left\{\omega\in \Omega \ :\ \sup_{x\in\T}\,\sup_{t\in[0,T]}\,|\widetilde{M}^K(t,x)|\leq \varepsilon'_K,\ \|\widetilde{\beta}^K\|_\text{Lip}\leq L_0 \right\},
\]
where $\varepsilon'_K=\delta_K^{-1/2} K^{-a/2}$ converges to 0 by~\eqref{hyp:deltaK}.
It follows from Lemma~\ref{lem:M} and Proposition~\ref{prop:Lipschitz-estimate} that $\P(\widetilde{\Omega}_K)\to 1$ when $K\to +\infty$. Hence, the set
\[
\Omega_0:=\widetilde{\Omega}_0\cap \limsup_{K\to+\infty} \widetilde{\Omega}_K
\]
has probability 1.

To prove that $\beta$ is a viscosity sub-solution of Equation \eqref{eq:HJ}, we  work 
$\omega$ by $\omega$ in $\Omega_0$.
Let $\omega\in\Omega_0$ and $T>0$ and consider a continuous function $\varphi:[0,T]\times\T$ (depending on $\omega$) such that $\beta(\omega)-\varphi$ attains a strict {global} maximum on {$[0,T]\times\T$} at the point $(\bar t(\omega),\bar x(\omega))$ such that $\bar{t}(\omega)>0$. 
We will prove that
$$
\frac{\partial}{\partial t} \varphi(\bar t,\bar x) \leq {b(\bar x)-d(\bar x)}+ p(x) \int_{\R} G(h)e^{h\partial_x \varphi(\bar t,\bar x)} dh.
$$
Since $\widetilde{A}^K(\omega)$ converges in $L^\infty([0,T]\times\T)$ to $\beta$, there exists for $K$ large enough a local maximum of $\widetilde{A}^K(\omega)-\varphi$ on  $[0,T]\times\T$ at a point $(t_K(\omega),x_K(\omega))$ such that $(t_K(\omega),x_K(\omega))\to (\bar t(\omega),\bar x(\omega))$ as $K\to \infty$. Assume $K$ is large enough so that $t_K(\omega)>0$. 
From now on, we will omit the dependencies with respect to $\omega\in\Omega_0$ to avoid heavy notation.

Defining $i_K\in\{0,\ldots,1/\delta_K-1\}$ such that $i_K\delta_K\leq {x_K}<(i_K+1)\delta_K$, we have
{\begin{align*}
&\frac{\partial}{\partial t} \widetilde A^K(t_K,x_K)  =
(1-\f {x_K}{\delta_K}+i_K)\frac{d}{dt}A_{i_K}^K(t_K)+(\f {x_K}{\delta_K}-i_K) \frac{d}{dt}A_{i_K+1}^K(t_K)\notag\\
&=(1-\f {x_K}{\delta_K}+i_K) N_{i_K}^K(t_K)\Big(b(i_K\delta_K) \log \big(1+\frac{1}{N_{i_K}^K(s)} \big)+d(i_K\delta_K) \log \big(1-\frac{1}{N_{i_K}^K(s)} \big)\Big)\notag\\
&+(\f {x_K}{\delta_K}-i_K) N_{i_K+1}^K(t_K)\Big(b\big((i_K+1)\delta_K\big) \log \big(1+\frac{1}{N_{i_K+1}^K(s)} \big)+d\big((i_K+1)\delta_K\big) \log \big(1-\frac{1}{N_{i_K+1}^K(s)} \big)\Big)\notag\\
&+ (1-\f {x_K}{\delta_K}+i_K)  \sum_{\ell=-\lfloor 1/2\delta_K\rfloor}^{1/\delta_K-1-\lfloor 1/2\delta_K\rfloor}h_Kp((\ell+i_K)\delta_K)G(h_K\ell) N_{\ell+i_K}^K(t_K)\log \big(1+\frac{1}{N_{i_K}^K(t_K)} \big)\notag\\
&+ (\f {x_K}{\delta_K}-i_K)  \sum_{\ell=-\lfloor 1/2\delta_K\rfloor}^{1/\delta_K-1-\lfloor 1/2\delta_K\rfloor}h_Kp((\ell+i_K+1)\delta_K)G(h_K\ell) N_{\ell+i_K+1}^K(t_K)\log \big(1+\frac{1}{N_{i_K+1}^K(t_K)} \big).
\end{align*}}
{Using that forall $x\geq -1/2$,
\begin{equation}b\log(1+x)+d\log(1-x)\le (b-d) x.\label{approxlog2}
\end{equation}and using Lemma \ref{lem:lower-bound}, we deduce that
\begin{align*}
&\frac{\partial}{\partial t} \widetilde A^K(t_K,x_K) \leq (1-\f {x_K}{\delta_K}+i_K) \big(b(i_K\delta_K) -d(i_K\delta_K)\big)(1+\frac{C}{K^a} )
\notag\\
&+(\f {x_K}{\delta_K}-i_K)\big(b((i_K+1)\delta_K) -d((i_K+1)\delta_K)\big)(1+\frac{C}{K^a})\notag\\
&+ (1-\f {x_K}{\delta_K}+i_K)  \sum_{\ell=-\lfloor 1/2\delta_K\rfloor}^{1/\delta_K-1-\lfloor 1/2\delta_K\rfloor}h_Kp((\ell+i_K)\delta_K)G(h_K\ell)e^{\log(K)(\beta_{\ell+i_K}^K(t_K)-\beta_{i_K}^K(t_K))}
(1+\frac{C}{K^a})
\notag\\
&+ (\f {x_K}{\delta_K}-i_K)  \sum_{\ell=-\lfloor 1/2\delta_K\rfloor}^{1/\delta_K-1-\lfloor 1/2\delta_K\rfloor}h_Kp((\ell+i_K+1)\delta_K)G(h_K\ell)e^{\log(K)(\beta_{\ell+i_K+1}^K(t_K)-\beta_{i_K+1}^K(t_K))} (1+\frac{C}{K^a})
\end{align*}
We next use the fact that $\mu$, $b$ and $d$ are $C^1$ functions in $\T$ to obtain, modifying the constant $C$ if necessary,
\begin{align}
&\frac{\partial}{\partial t} \widetilde A^K(t_K,x_K) \leq \big( b(x_K)-d(x_K)+C\delta_K)(1+\frac{C}{K^a} )\notag\\
&+(1-\f {x_K}{\delta_K}+i_K)  \sum_{\ell=-\lfloor 1/2\delta_K\rfloor}^{1/\delta_K-1-\lfloor 1/2\delta_K\rfloor}h_K (p(x_K)+C\delta_K(|\ell|+1)) G(h_K\ell)e^{\log(K)(\beta_{\ell+i_K}^K(t_K)-\beta_{i_K}^K(t_K))} (1+\frac{C}{K^a})
\notag\\
&+ (\f {x_K}{\delta_K}-i_K)  \sum_{\ell=-\lfloor 1/2\delta_K\rfloor}^{1/\delta_K-1-\lfloor 1/2\delta_K\rfloor}h_K (p(x_K)+C\delta_K(|\ell|+1)) G(h_K\ell)e^{\log(K)(\beta_{\ell+i_K+1}^K(t_K)-\beta_{i_K+1}^K(t_K))} (1+\frac{C}{K^a})
 \label{eq:calcul-penible}
\end{align}
}
Since $(t_K,x_K)$ is a maximum point of $\widetilde A^K-\varphi$, we deduce that
\begin{align*}
\lefteqn{\beta_j^K(t_K)-\beta_{i_K}^K(t_K) }\\
&  =\widetilde{\beta}^K(t_K,j\delta_K)-\widetilde{\beta}^K(t_K,x_K)-\left(\beta_{i_K}^K(t_K)-\widetilde{\beta}^K(t_K,x_K)\right) \\ & \leq\varphi(t_K,j\delta_K)-\varphi(t_K,x_K)+\widetilde{M}^K(t_K,j\delta_K)-\widetilde{M}^K(t_K,x_K)-\left(\beta_{i_K}^K(t_K)-\widetilde{\beta}^K(t_K,x_K)\right)
\end{align*}
and similarly for $\beta_{j+1}^K(t_K)-\beta_{i_K+1}^K(t_K)$.
\\
In addition,
$$
\varphi(t_K,j\delta_K)- \varphi(t_K,x_K) \leq (j-i_K)\delta_K\partial_x \varphi(t_K,x_K)+O(|x_K-i_K\delta_K|)+O(|j-i_K|^2\delta_K^2).
$$
Therefore, since $\omega\in\limsup \widetilde{\Omega}_K$, there exists a subsequence in $K$ (still denoted $K$) along which
\begin{align*}
\beta_j^K(t_K)-\beta_{i_K}^K(t_K) & \leq (j-i_K)\delta_K\partial_x \varphi(x_K)+C\left(|x_K-i_K\delta_K|+|j-i_K|^2\delta_K^2+\varepsilon_K\right) \\ & \leq (j-i_K)\delta_K\partial_x \varphi(x_K)+C (|j-i_K|^2\delta_K^2+\delta_K+\varepsilon_K)
\end{align*}
and
\[
\beta_{j+1}^K(t_K)-\beta_{i_K+1}^K(t_K)\leq (j-i_K)\delta_K\partial_x \varphi(t_K,x_K) +C(|j-i_K|^2\delta_K^2+\delta_K+\varepsilon_K).
\]
Combining these inequalities with~\eqref{eq:calcul-penible}, we obtain
\begin{multline}
    \frac{\partial}{\partial t}\varphi(t_K,x_K) \leq\frac{\partial}{\partial t} \widetilde A^K(t_K,x_K)
    \leq \big( b(x_K)-d(x_K)+C\delta_K)(1+\frac{C}{K^a} )\\
    +
    \sum_{\ell=-\lfloor 1/2\delta_K\rfloor}^{1/\delta_K-1-\lfloor 1/2\delta_K\rfloor}h_K (p(x_K)+C\delta_K(|\ell|+1)) G(h_K\ell) 
    e^{h_K\ell\partial_x\varphi(t_K,x_K)+C\frac{(h_K\ell)^2}{\log K}+o(1)} (1+\frac{C}{K^a})
   \label{ineq:xK}
\end{multline}
Since $p(x_K)\to p(\bar{x})$ when $K\to+\infty$, to prove the convergence of the  sum in the right-hand side of \eqref{ineq:xK}, it is sufficient to study the convergence of
\begin{align*}
    S= & \sum_{\ell=-\lfloor 1/2\delta_K\rfloor}^{1/\delta_K-1-\lfloor 1/2\delta_K\rfloor} h_K G\big(h_K\ell \big)e^{h_K\ell\partial_x\varphi(t_K,x_K)+C\frac{(h_K\ell)^2}{\log K}}.
\end{align*}
Recall from Assumption \ref{hypmodele} that $G$ is continuous and that there exists $R>0$ such that $G$ is nonincreasing on $[R,+\infty)$ and nondecreasing on $(-\infty,-R]$. We first notice that
\[
S_0=\sum_{\ell=\lfloor -R/h_K \rfloor}^{\lfloor R/h_K\rfloor } h_K G\big(h_K\ell \big)e^{h_K\ell\partial_x\varphi(t_K,x_K)+C\frac{(h_K\ell)^2}{\log K}}
\]
is a Riemann sum which converges to $\int_{-R}^R G(y) e^{y\partial_x\varphi(\bar t,\bar x)}dy$. Hence we only have to deals with the remainder $S-S_0$. We detail the analysis for
\[
S_+=\sum_{\ell= \lfloor R/h_K\rfloor +1}^{1/\delta_K-1-\lfloor 1/2\delta_K\rfloor}h_K G\big(h_K\ell \big)e^{h_K\ell\partial_x\varphi(t_K,x_K)+C\frac{(h_K\ell)^2}{\log K}}.
\]
A similar computation applies to the lower tail.

For all $\varepsilon>0$, there exists $K_0$ such that for $K\geq K_0$, $|\partial_x\varphi(t_K,x_K)-\partial_x\varphi(\bar{t},\bar{x})| \leq \varepsilon$. Hence, setting $a=\partial_x\varphi(\bar{t},\bar{x})$ and recalling that $G$ is nonincreasing on $[R,+\infty)$, for $K$ large enough,
\begin{align*}
    S_+ & \leq \sum_{\ell= \lfloor R/h_K \rfloor +1}^{1/\delta_K-1-\lfloor 1/2\delta_K\rfloor} h_K G\big(h_K\ell \big)e^{ah_K\ell+C\frac{(h_K\ell)^2}{\log K}+\varepsilon h_K|\ell|} \\
    & \leq \sum_{\ell=\lfloor R/h_K \rfloor +1}^{1/\delta_K-1-\lfloor 1/2\delta_K\rfloor} \int_{h_K(\ell-1)}^{h_K\ell } G\big(h_K\ell \big)e^{ah_K\ell+C\frac{(h_K\ell)^2}{\log K}+\varepsilon h_K|\ell|} \ dy\\
    & \leq \int_{R}^{h_K/2\delta_K} G(y) e^{a y+|a|h_K+C\frac{y^2+h_K(2|y|+h_K))}{\log K}+\varepsilon (|y|+h_K)}\ dy \\
    & \leq \int_{R}^{\log K} G(y) e^{a y+C\frac{y^2+2h_K|y|}{\log K}+\varepsilon |y|}(1+\varepsilon)\ dy.
\end{align*}
Observing that, for $|y|\leq (\log K)^{1/3}$, $\frac{y^2+2h_K|y|}{\log K}\leq (\log K)^{-1/3}+2\delta_K(\log K)^{1/3}\to 0$ when $K\to+\infty$ and that, for $|y|\leq \log K$, $\frac{y^2+2h_K|y|}{\log K}\leq y+2h_K$, we can decompose the domain integration as $[R,\log K]=[R,(\log K)^{1/3})\cup[(\log K)^{1/3},\log K]$ to deduce that, for $K$ large enough,
\begin{align*}
S_+ & \leq (1+2\varepsilon)\int_{R}^{(\log K)^{1/3}}G(y)e^{a y+\varepsilon|y|}dy
+(1+2\epsilon)\int_{(\log K)^{1/3}}^{\log K}G(y) e^{(a+C)y+\varepsilon|y|}dy \\ & \leq(1+2\varepsilon)\int_{R}^{+\infty}G(y)e^{a y+\varepsilon|y|}dy+\frac{1+2\epsilon}{e^{(\log K)^{1/3}}}\int_\mathbb{R} G(y) e^{(a+C+2)|y|}dy.
\end{align*}
Now, by dominated convergence,
\[
\int_{R}^{+\infty}G(y)e^{a y+\varepsilon|y|}dy\xrightarrow[\varepsilon\to 0]{}\int_{R}^{+\infty}G(y)e^{a y}dy.
\]
To conclude, we have proved that
\[
\limsup_{K\to+\infty} S\leq \int_\mathbb{R}G(y)e^{ y\partial_x\varphi(\bar{t},\bar{x})}dy.
\]
Therefore,
\[
\frac{\partial}{\partial t}\varphi(\bar{t},\bar{x})\leq b(\bar x)-d(\bar x)+p(\bar x)\int_{\R} G(h) e^{h\partial_x\varphi(\bar{t},\bar{x})}dh.
\]
We conclude that $\beta$ is a viscosity sub-solution of~\eqref{eq:HJ} in $(0,T]\times \T$.

Following similar arguments, we can prove that $\beta$ is a viscosity super-solution, and hence a viscosity solution of~\eqref{eq:HJ} in $(0,T]\times \T$. The result then follows from uniqueness of a Lipschitz viscosity solution of~\eqref{eq:HJ}~\cite{GB:94}.



\bigskip
\noindent \textbf{Acknowledgements}:
This work has been supported by the Chair ``Modélisation Mathématique et Biodiversité" of Veolia Environnement-Ecole Polytechnique-Museum National d’Histoire Naturelle-Fondation X. S.M. was partially supported by the ANR project DEEV ANR-20-CE40-0011-01. V.C.T. also acknowledges support from Labex Bézout (ANR-10-LABX-58).

{\footnotesize
\providecommand{\noopsort}[1]{}\providecommand{\noopsort}[1]{}\providecommand{\noopsort}[1]{}\providecommand{\noopsort}[1]{}

}
\end{document}